\documentclass[12pt,twoside, final]{amsart}
\UseRawInputEncoding
\NeedsTeXFormat{LaTeX2e}[2005/12/01]
\pagespan{1}{\pageref*{LastPage}}
\usepackage{etoolbox,lastpage}
\commby{}
%\date{\scriptsize   Received: , Accepted: .}
\usepackage{amsmath,amsfonts,amssymb,enumerate}
\usepackage{graphicx}		
\usepackage{color}
\usepackage[colorlinks]{hyperref}
 
\newtheorem{theorem}{Theorem}[section]
\newtheorem{proposition}[theorem]{Proposition}
\newtheorem{lemma}[theorem]{Lemma}
\newtheorem{corollary}[theorem]{Corollary}
\theoremstyle{definition}
\newtheorem{definition}[theorem]{Definition}
\newtheorem{example}[theorem]{Example}
\theoremstyle{remark}
\newtheorem{remark}[theorem]{Remark}
\numberwithin{equation}{section}
 % The distance.
 % The trace.

\usepackage[all]{xy}
%% This is the end of the preamble.
 \begin{document}

%% The title of the paper goes here.  Edit your title.
 
\title[The $S$-flat topology ]{The $S$-flat topology } 
%% Now edit the following to give First Author name and address:
%% $^*$ for the corresponding author.
 
\author[M. Aqalmoun]{Mohamed Aqalmoun$^*$}
\address[Mohamed Aqalmoun]{Department of Mathematics, Higher Normal School, Sidi Mohamed Ben Abdellah University, Fez, Morocco.}
\email{ maqalmoun@yahoo.fr}

%\author[S. Author]{Second Author $^*$}
%\address[Second Author]{Complete Address}
%\email{email@some.ir}
%% If there are three of more authors they are added in the obvious way. 

  \thanks{$^*$Corresponding author}
%----------------------------------------------------------%
%%
%% Use the following command to make the title for the paper.
%
 %\CoverPage
 
 \maketitle
%
%%% The following environment is needed for the abstract.  
%%%

\begin{abstract}
For a commutative ring $R$ with unit $1\ne 0$ and a multiplicatively closed subset $S$ of $R$, we introduce a new topology on the $S$-prime spectrum $\mathrm{Spec}_SR$ of $R$ called the $S$-flat topology. Our aims is to give an algebraic descriptions of the topological properties like compactness, irreducibility, connectivity and noetherianess with respect to this new topology .
\\
\textbf{Keywords:}   $S$-prime ideal, $S$-Zariski topology, $S$-flat topology.  \\
\textbf{MSC(2010):}  13A15 ; 13A99; 54A99.
\end{abstract}
 
\section{\bf Introduction}
In this paper, we focus on only commutative rings with $1\ne 0$. Let $R$ always denote  such a ring. Consider a subset $S$ of $R$. We call $S$ a multiplicatively closed subset of $R$ if $(i)$ $0\not\in S$, $(ii)$ $1\in S$, and $(iii)$ $ss'\in S$ for all $s,s'\in S$. Let $S$ be a multiplicatively closed subset of $R$ and $P$ be an ideal of $R$. Then the ideal $P$ is called an $S$-prime ideal of $R$ if $P\cap S=\emptyset$ and there exists $s\in S$ and whenever $ab\in P$ then $sa\in P$ or $sb\in P$. If we take $S\subseteq u(R)$, where $u(R)$ denotes the set of units in $R$, the notions of $S$-prime ideal  and prime ideal are the same thing. Here, we denote the sets of all prime and all $S$-prime ideals by $\mathrm{Spec}R$ and $\mathrm{Spec}_SR$, respectively. In \cite{Eda}, the authors construct a topology on $\mathrm{Spec}_SR$ called the $S$-Zariski topology, precisely, the collection of closed subsets for this topology are $V_S(I)=\{P\in \mathrm{Spec}_SR\ / sI\subseteq P \text{ for some } s\in S\}$ where $I$ runs trough the set of all ideals of $R$. Note that, if $S\subseteq u(R)$, then the $S$-Zariski and the classical Zariski topologies are equal.\par 
In $1982$, D. E. Dobbs, M. Fontana and I. J. Papick, in \cite{DoFo}, introduce and studied  a new  topology on the prime spectrum $\mathrm{Spec}R$, which behave completely as a dual of the Zariski topology, and where it is discussed its relation with the inverse-order topology studied by Hochster \cite{Hosh}. It is called the flat topology. For this topology the collection of subsets $V(I)$ where $I$ runs through the set of finitely generated ideals of $R$ forms a basis of opens. Many authors have studied the flat topology, and have obtained several results which are useful studying commutative rings, for instance see, \cite{DoFo}, \cite{Tar}, \cite{Fino},\cite{FiCar}.\par 
In this paper, following \cite{DoFo} and \cite{Tar}, we construct a new topology on the $S$-prime spectrum $\mathrm{Spec}_SR$ called the $S$-flat topology. The $S$-flat topology is the coarsest topology on the $S$-prime spectrum $\mathrm{Spec}_SR$ such that the subsets $D_S(f) = \{P\in \mathrm{Spec}_SR\ /\  sf\not\in P \text{ for all } s\in S \}$ are closed where $f$ runs through $R$. Note that, if $S\subseteq u(R)$ then $S$-flat and flat topology are the same. It follows that, by using the $S$-flat topology of a ring, we generalize the flat topology of $R$. In section $2$, we establish a correspondence between $S$-Zariski closed subsets and $S$-radical ideals. In section $3$, we introduce the $S$-flat topology and some basic properties. In section $3$, it is shown that every $S$-Zariski closed subset is an $S$-flat quasi-compact subset, also the irreducible and connected components are characterized.    
\section{\bf The $S$-Zariski topology}
This section is dedicated to some new results about the $S$-Zariski topology. We  start with some  terminologies. Let $R$ be a commutative ring with unit $\ne 0$, and $S$ be a multiplicative subset of $R$. According to \cite{Eda}, if $E$ is a subset of $R$, the $S$-variety of $E$ is defined by $$V_S(E)=\{P\in \mathrm{Spec}_SR/ \ sE\subseteq P\ \text{ for some } s\in S\}$$ 
The following theorem can be found in \cite{Eda}.
\begin{theorem}
The following statements hold.
\begin{enumerate}
\item $V_S(E)=V_S((E))$ for any subset $E$ of $R$, where $(E)$ is the ideal generated by $E$.
\item $V_S(R)=\emptyset$ and $V_S((0))=\mathrm{Spec}_SR$.
\item (Arbitrary intersection) $\cap_{i\in \Delta}V_S(J_i)=V_S(\cup_{i\in \Delta}J_i)$ for any ideals $J_i$ of $R$.
\item (Finite union) $V_S(I)\cup V_S(J)=V_S(I\cap J)=V_S(IJ)$.
\end{enumerate}
\end{theorem}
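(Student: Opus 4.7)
The plan is to verify the four statements in order, each time exploiting the definition of $S$-prime ideal together with the disjointness $P \cap S = \emptyset$ and the multiplicative closure of $S$. Throughout I write $s_P \in S$ for an intrinsic $S$-prime witness attached to an $S$-prime $P$, i.e.\ one such that $ab \in P$ implies $s_P a \in P$ or $s_P b \in P$.

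For (1) the inclusion $V_S((E)) \subseteq V_S(E)$ is immediate from $E \subseteq (E)$. For the reverse, if $sE \subseteq P$ then any finite $R$-linear combination $\sum r_i e_i \in (E)$ satisfies $s\sum r_i e_i = \sum r_i(se_i) \in P$, so the same $s$ witnesses $s(E) \subseteq P$. Part (2) is direct: $V_S(R) = \emptyset$ because $s \cdot 1 = s \in S$ never lies in $P$, and $V_S((0)) = \mathrm{Spec}_S R$ because $s \cdot 0 = 0 \in P$ for every $s$.

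For (3) the inclusion $V_S\bigl(\bigcup_i J_i\bigr) \subseteq \bigcap_i V_S(J_i)$ is immediate. For the reverse, the obstacle is that each $V_S(J_i)$ a priori supplies a different witness $s_i \in S$. I would resolve this by passing to the intrinsic witness: given $s_i J_i \subseteq P$ and any $x \in J_i$, the $S$-prime property applied to the product $s_i \cdot x \in P$ forces either $s_P s_i \in P$, which is impossible since $s_P s_i \in S$, or $s_P x \in P$. Thus $s_P$ serves as a uniform witness and $s_P \bigl(\bigcup_i J_i\bigr) \subseteq P$.

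For (4) the easy chain $V_S(I) \cup V_S(J) \subseteq V_S(I \cap J) \subseteq V_S(IJ)$ follows from $IJ \subseteq I \cap J \subseteq I, J$. I expect the delicate inclusion $V_S(IJ) \subseteq V_S(I) \cup V_S(J)$ to be the main obstacle. My plan is a contradiction argument: suppose $s(IJ) \subseteq P$ for some $s \in S$ while $s_P I \not\subseteq P$ and $s_P J \not\subseteq P$, so there exist $a \in I$, $b \in J$ with $s_P a, s_P b \notin P$. From $s(ab) = (sa)b \in P$ the $S$-prime property yields $s_P(sa) \in P$ (the alternative $s_P b \in P$ is ruled out), that is $(s_P s)a \in P$; applying the $S$-prime property once more to this product gives either $s_P(s_P s) \in P$, impossible as $s_P^2 s \in S$, or $s_P a \in P$, contradicting the choice of $a$. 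The two ingredients that make the argument run are the closure of $S$ under products (killing the "bad" branch at every step) and the fact that the same $s_P$ may be iterated because it depends only on $P$, not on the particular product being tested.
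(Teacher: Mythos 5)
Your proof is correct. Note that the paper itself offers no argument for this theorem --- it is quoted verbatim from the reference [Eda Y{\i}ld{\i}z et al., \emph{On $S$-Zariski topology}] --- so there is no in-paper proof to compare against; your write-up is a sound, self-contained reconstruction. The one device that carries all the weight, and which you identify and use correctly, is the ``witness upgrade'': whenever $tx\in P$ with $t\in S$, applying the defining property of the intrinsic witness $s_P$ to the factorization $t\cdot x$ kills the branch $s_Pt\in P$ (since $s_Pt\in S$ and $P\cap S=\emptyset$) and forces $s_Px\in P$. This is exactly what resolves the non-uniformity of witnesses in (3) and what makes the double application in (4) terminate, since $s_P$ depends only on $P$ and may be reused. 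All four parts check out as written.
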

From the above result, the set $\{V_S(E)\ / E\subseteq R\}$ satisfies all axioms of closed set for a topology of $\mathrm{Spec}_SR$ called the $S$-Zariski topology. \\
For ideal $I$ of $R$, we denote $D_S(I)$ the open subset $$D_S(I)=\mathrm{Spec}_SR-V_S(I)=\{P\in \mathrm{Spec}_SR\ / sI\not\subseteq P\  \text{ for all } s\in S\}$$
The collection of all $D_S(f)$, $f\in R$ form a basis of the $S$-Zariski on $\mathrm{Spec}_SR$, see \cite{Eda}.
\begin{definition}
Let $I$ be an ideal of $R$. The $S$-radical of $I$ is defined by
$$\sqrt[S]{I}=\{a\in R/ \ sa^n\in I \text{ for some } s\in S \text{ and } n\in \Bbb N\}$$
\end{definition}
\begin{proposition}\label{Variety}
Let $I$ be an ideal of $R$. Then $\sqrt[S]{I}=\displaystyle\cap_{I\subseteq P, S\cap P=\emptyset} P $. In particular if $Q$ is a prime ideal disjoint with $S$ then $\sqrt[S]{Q}=Q$.
\end{proposition}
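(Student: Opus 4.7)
The plan is to prove the equality by the standard two inclusions, mirroring the classical characterization of $\sqrt{I}$ as the intersection of primes containing $I$, with the added care that our primes must be disjoint from $S$.

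For the inclusion $\sqrt[S]{I}\subseteq\bigcap_{I\subseteq P,\ S\cap P=\emptyset}P$, I would take $a\in\sqrt[S]{I}$, so that $sa^n\in I$ for some $s\in S$ and some $n\in\mathbb{N}$. For any prime $P$ containing $I$ and disjoint from $S$, we get $sa^n\in P$; since $s\notin P$ and $P$ is prime, this forces $a^n\in P$, hence $a\in P$. This step is entirely routine.

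The reverse inclusion is where the real work lies, and it requires the Krull-type argument. Given $a\notin\sqrt[S]{I}$, I would build the set
$$T=\{sa^n : s\in S,\ n\ge 0\}.$$
The main verification is that $T$ is a multiplicatively closed subset of $R$ disjoint from $I$: closure under multiplication follows from $(sa^n)(s'a^m)=(ss')a^{n+m}$ and $1=1\cdot a^0\in T$, while $T\cap I=\emptyset$ (and in particular $0\notin T$) is exactly the translation of $a\notin\sqrt[S]{I}$. Once this is in place, the classical Krull/Zorn argument applied to the poset of ideals containing $I$ and missing $T$ produces a prime ideal $P$ with $I\subseteq P$ and $P\cap T=\emptyset$. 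Since $S\subseteq T$, this $P$ is disjoint from $S$, and since $a\in T$, we have $a\notin P$, proving $a\notin\bigcap P$.

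The ``In particular'' claim is an immediate consequence: if $Q$ is a prime ideal disjoint from $S$, then $Q$ itself appears in the intersection, so $\sqrt[S]{Q}\subseteq Q$; the reverse containment $Q\subseteq\sqrt[S]{Q}$ holds trivially by taking $s=1$ and $n=1$ in the definition. I do not expect a serious obstacle here: the only subtle point is checking that $T$ genuinely avoids $I$ (rather than merely avoiding $0$), which is exactly guaranteed by our hypothesis on $a$, and this is what distinguishes the $S$-radical argument from the classical one.
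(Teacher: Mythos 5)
Your proposal is correct and follows essentially the same route as the paper: the easy inclusion uses primality of $P$ together with $P\cap S=\emptyset$, and the hard inclusion builds the multiplicative set $\{sa^n : s\in S,\ n\in\mathbb{N}\}$ (the paper's $S'$) and invokes the Krull/Zorn existence of a prime containing $I$ and disjoint from it. The only difference is presentational — you argue the contrapositive directly while the paper phrases it as a proof by contradiction — and your write-up is in fact slightly more careful, since the paper drops the exponent $n$ in the first inclusion and leaves the verification that $S'$ is multiplicatively closed and misses $I$ implicit.
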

\begin{proof}
Let $P$ be a prime ideal disjoint with $S$ containing $I$. If $x\in \sqrt[S]{I}$, there exists $s\in S$ such that $sx\in I$, so $sx\in P$, therefore $x\in P$. It follows that $\sqrt[S]{I}\subseteq\displaystyle\cap_{I\subseteq P, S\cap P= \emptyset} P $. For the converse, suppose to the contrary  that $\sqrt[S]{I}\subsetneq\displaystyle\cap_{I\subseteq P, S\cap P=\emptyset} P $. Let $y\in \cap_{I\subseteq P, S\cap P=\emptyset} P-\sqrt[S]{I}$. Consider the multiplicative subset $S'=\{y^ns\ /\ s\in S, \ n\in \Bbb N\}$. Since $y\not\in \sqrt[S]{I}$ the ideal $I$ is disjoint with $S'$. It follows that there exists a prime ideal $Q$ disjoint with $S'$ containing $I$, in particular $y\not\in Q$, a contradiction. 
\end{proof}
Note that, if $I$ is an ideal of $R$, then $V_S(I)=V_S(\sqrt[S]{I})$. The following result show the influence of the $S$-radical on the $S$-variety and how they are attached.
\begin{proposition}
Let $I$ and $J$ be ideals of $R$. The following statements hold.
\begin{enumerate}
\item $V_S(I)\subseteq V_S(J)$ if and only if  $\sqrt[S]{J}\subseteq \sqrt[S]{I}$.
\item $V_S(I)=V_S(J)$ if and only if $\sqrt[S]{I}=\sqrt[S]{J}$.
\item $V_S(I)=\mathrm{Spec}_SR$ if and only if $\sqrt[S]{I}=\sqrt[S]{0}$.
\item $V_S(I)=\emptyset$ if and only if $ \sqrt[S]{I}=R$ if and only if $I\cap S\ne \emptyset$
\end{enumerate}
\end{proposition}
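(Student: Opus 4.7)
The plan is to use the two already-available tools together: the identity $V_S(I)=V_S(\sqrt[S]{I})$ noted just before the statement, and the characterization $\sqrt[S]{I}=\bigcap_{I\subseteq P,\, P\cap S=\emptyset} P$ from Proposition~\ref{Variety}. Once these are in hand, (1) is the real content; items (2)--(4) are one-line consequences.

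For the easy direction of (1), if $\sqrt[S]{J}\subseteq\sqrt[S]{I}$, then inclusion-reversing of $V_S$ and the identity above give
\[
V_S(I)=V_S(\sqrt[S]{I})\subseteq V_S(\sqrt[S]{J})=V_S(J).
\]
For the converse, I would exploit the key observation that any prime ideal $P$ of $R$ with $P\cap S=\emptyset$ is automatically $S$-prime (take the witness $s=1$), and if such $P$ contains $I$ then $1\cdot I\subseteq P$ shows $P\in V_S(I)$. So assume $V_S(I)\subseteq V_S(J)$ and fix a prime $P\supseteq I$ with $P\cap S=\emptyset$. Then $P\in V_S(J)$, hence $sJ\subseteq P$ for some $s\in S$; since $P$ is a prime ideal and $s\notin P$, this forces $J\subseteq P$. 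Thus every prime $P\supseteq I$ disjoint from $S$ also contains $J$, and taking intersections Proposition~\ref{Variety} yields $\sqrt[S]{J}\subseteq\sqrt[S]{I}$.

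Item (2) follows by applying (1) in both directions. For (3), $V_S(I)=\mathrm{Spec}_SR=V_S((0))$ is, by (2), equivalent to $\sqrt[S]{I}=\sqrt[S]{0}$. For (4), $V_S(I)=\emptyset=V_S(R)$ is equivalent by (2) to $\sqrt[S]{I}=\sqrt[S]{R}=R$; and $\sqrt[S]{I}=R$ is equivalent to $1\in\sqrt[S]{I}$, i.e.\ to the existence of $s\in S$ and $n\in\mathbb{N}$ with $s=s\cdot 1^n\in I$, which is just $I\cap S\neq\emptyset$.

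The only subtle point is the asymmetry in the prime-ideal data seen by $V_S$ (which collects $S$-prime ideals) versus $\sqrt[S]{\,\cdot\,}$ (whose intersection formula in Proposition~\ref{Variety} runs over genuine prime ideals disjoint from $S$). The resolution used above is that the latter class sits inside the former, and that is exactly enough to push the ``$\Rightarrow$'' direction of (1) through; no extracting of a prime from an arbitrary $S$-prime is needed.
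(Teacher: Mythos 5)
Your proof is correct, and for the substantive direction of (1) it takes a genuinely different route from the paper. The paper proves $V_S(I)\subseteq V_S(J)\Rightarrow\sqrt[S]{J}\subseteq\sqrt[S]{I}$ by an element chase: given $x\in\sqrt[S]{J}\setminus\sqrt[S]{I}$ it builds the multiplicative set $S'=\{sx^n\colon s\in S,\ n\in\mathbb{N}\}$, extracts a prime $P\supseteq I$ avoiding $S'$, and checks directly that $P\in V_S(I)\setminus V_S(J)$; it then cites \cite{Eda} for the converse implication. You instead factor the whole prime-avoidance argument through Proposition~\ref{Variety}: a prime $P\supseteq I$ disjoint from $S$ is $S$-prime (witness $s=1$) and lies in $V_S(I)$, hence in $V_S(J)$, and primeness of $P$ upgrades $sJ\subseteq P$ to $J\subseteq P$; intersecting over all such $P$ gives the containment of $S$-radicals. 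This is cleaner because the saturation trick is used only once (inside Proposition~\ref{Variety}) rather than being re-run, and your closing remark correctly identifies the one point that needs care, namely that $V_S$ ranges over $S$-primes while the intersection formula ranges over honest primes disjoint from $S$, and that the inclusion of the latter class in the former is all that is needed. You also make the easy direction self-contained via the stated identity $V_S(I)=V_S(\sqrt[S]{I})$ and the inclusion-reversing property of $V_S$, where the paper simply defers to \cite{Eda}; note that this identity, asserted without proof in the paper, does itself require a short argument for a general $S$-prime $P$ (one passes to the prime $(P:s_P)$, which is disjoint from $S$), so you are leaning on a stated but unproved fact --- legitimate within the paper's framework, but worth flagging. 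Items (2)--(4) are handled identically in both treatments.
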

\begin{proof}
$(1)$ Assume that $V_S(I)\subseteq V_S(J)$. Let $x\in \sqrt[S]{J}$ then $tx^m\in J$ for some $t\in S$ and $m\in \Bbb N$. Suppose to the contrary that $x\not\in \sqrt[S]{I}$, that is, $sx^n\not\in I$ for all $s\in S$ and $n\in \Bbb N$. Let $S'$ be the set of all elements of $R$ of the form $sx^n$ where $s\in S$ and $n\in N$. Then $S'$ is a multiplicative subset of $R$ and $I\cap S'=\emptyset$. So the exists a prime ideal $P$ of $R$ containing $I$ such that $S'\cap P=\emptyset$, in particular $P\in V_S(I)$. But $P\not\in V_S(J)$, in fact, if $P\in V_S(J)$ then $s'J\subseteq P$ for some $s'\in S$, so $s'tx^m\in J$ since $tx^m\in J$, therefor $s'tx^m\in S'\cap P$ which is not possible.  It follows that $x\in \sqrt[S]{I}$. Thus $\sqrt[S]{J}\subseteq \sqrt[S]{I}$. For the converse see \cite{Eda}.
\\
$(2)$ This follows from the previous result.\\
$(3)$ Take $J=(0)$. For $(4)$ take $J=R$. 
\end{proof}
\begin{definition}
Let $R$ be a commutative ring and $S$ be a multiplicative subset of $R$. Let $I$ be an ideal of $R$. We say that $I$ is $S$-radically finite if there exists a finitely generated ideal $J$ such that $\sqrt[S]{I}=\sqrt[S]{J}$.
\end{definition}
\begin{lemma}
An ideal $I$ is $S$-radically finite if and only if there exists a finitely generated sub-ideal $J$ of $I$ such that $\sqrt[S]{I}=\sqrt[S]{J}$.
\end{lemma}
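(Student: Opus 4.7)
My plan for proving this lemma is to treat the two directions separately and to invoke the intersection-of-primes description of the $S$-radical from Proposition~\ref{Variety} to handle the harder direction.

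The backward direction is tautological: a finitely generated sub-ideal $J\subseteq I$ with $\sqrt[S]{J}=\sqrt[S]{I}$ is, in particular, a finitely generated ideal witnessing that $I$ is $S$-radically finite.

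For the forward direction I would start from an arbitrary finitely generated witness $J=(a_1,\dots,a_n)$ of the equality $\sqrt[S]{J}=\sqrt[S]{I}$, where $J$ is a priori not contained in $I$. Each generator $a_i$ lies in $J\subseteq\sqrt[S]{J}=\sqrt[S]{I}$, so I can pick $s_i\in S$ and $n_i\in\mathbb{N}$ with $b_i:=s_ia_i^{n_i}\in I$, and set $J':=(b_1,\dots,b_n)$. By construction $J'$ is finitely generated and contained in $I$; what remains is to verify $\sqrt[S]{J'}=\sqrt[S]{I}$. The inclusion $\sqrt[S]{J'}\subseteq\sqrt[S]{I}$ is immediate from $J'\subseteq I$ together with the monotonicity of the $S$-radical. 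For the reverse inclusion I would appeal to Proposition~\ref{Variety} and write
\[
\sqrt[S]{J'}=\bigcap_{\substack{J'\subseteq P\\ P\cap S=\emptyset}} P,
\]
the intersection being over ordinary prime ideals of $R$. For any such $P$, the identity $b_i=s_ia_i^{n_i}\in J'\subseteq P$ combined with $s_i\notin P$ (since $P\cap S=\emptyset$) forces $a_i\in P$ for every $i$, so $J\subseteq P$; hence $P$ appears in the analogous intersection defining $\sqrt[S]{J}$. This yields $\sqrt[S]{J}\subseteq\sqrt[S]{J'}$, and together with $\sqrt[S]{J}=\sqrt[S]{I}$ it gives the missing inclusion.

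The only delicate point is this last inclusion, where the ``radical'' information carried by the original generators $a_i$ has to be transferred to their twisted powers $b_i\in I$. Working directly from the defining condition $sa^n\in J'$ would entangle the argument with the $S$-prime property, whereas routing through Proposition~\ref{Variety} reduces the matter to an elementary computation with genuine prime ideals disjoint from $S$, which is the cleanest path.
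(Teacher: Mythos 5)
Your proposal is correct and follows essentially the same route as the paper: both replace each generator $a_i$ of the witnessing ideal by a twisted power $s_ia_i^{n_i}\in I$ and show the resulting finitely generated sub-ideal has the same $S$-radical. The only difference is cosmetic — where the paper asserts ``it is easy to see that $a_i\in\sqrt[S]{(s_1a_1^{n_1},\ldots,s_ma_m^{n_m})}$'' and stops, you justify the remaining inclusion explicitly via the intersection-of-primes description of Proposition~\ref{Variety}, which is a perfectly good way to fill in that detail.
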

\begin{proof}
If $I$ is $S$-radically finite, then $\sqrt[S]{J}=\sqrt[S]{(a_1,\ldots,a_m)}$ where $a_i\in R$. We have $a_i\in \sqrt[S]{J}$, so $s_ia_i^{n_i}\in J$ for some $s_i\in S $ and $n_i\in \Bbb N$, in particular $\sqrt[S]{(s_1a_1^{n_1},\ldots,s_ma_m^{n_m})}\subseteq \sqrt[S]{J}$. It is easy to see that   $a_i\in \sqrt[S]{(s_1a_1^{n_1},\ldots,s_ma_m^{n_m}) }$. Thus $\sqrt[S]{J}=\sqrt[S]{(s_1a_1^{n_1},\ldots,s_ma_m^{n_m})}$ where $(s_1a_1^{n_1},\ldots,s_ma_m^{n_m})$ is a finitely generated sub-ideal of $J$. The converse is immediate.
\end{proof}
The following theorem is a generalization of \cite[Theorem 6]{Eda}. 
\begin{theorem}
Let $I$ be an ideal of $R$. Then $D_S(I)$ is quasi-compact with respect to the $S$-Zariski topology if and only if $I$ is $S$-radically finite.
\end{theorem}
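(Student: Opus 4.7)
The plan is to mirror the classical ring-theoretic proof that $D(I)$ is quasi-compact iff $I$ has the same radical as a finitely generated ideal, adapting the arguments to the $S$-setting. The main technical input is the identity
\[
D_S(I)=\bigcup_{f\in I}D_S(f),
\]
which is less routine than in the classical case and will be the crucial obstacle.

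First I would establish a working characterization of $D_S(f)$ in terms of a single witness. Since $P\in\mathrm{Spec}_SR$, there is some $s_P\in S$ such that $ab\in P$ implies $s_Pa\in P$ or $s_Pb\in P$. I would verify the equivalence $P\in D_S(f)\iff s_Pf\notin P$: the forward direction is immediate; for the backward direction, if $sf\in P$ then $S$-primality forces $s_Ps\in P$ or $s_Pf\in P$, and the first option contradicts $P\cap S=\emptyset$, so the second must hold. Using this, if $P\in D_S(I)$ then by definition $s_PI\not\subseteq P$, which provides some $f\in I$ with $s_Pf\notin P$, hence $P\in D_S(f)$. The reverse inclusion $D_S(f)\subseteq D_S(I)$ for $f\in I$ is direct. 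This yields the key identity displayed above.

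Next I would record the elementary union formula $D_S(f_1)\cup\cdots\cup D_S(f_n)=D_S((f_1,\ldots,f_n))$, which follows from $V_S(I)\cap V_S(J)=V_S(I\cup J)=V_S(I+J)$ (part (3) of the preceding Theorem combined with $V_S(E)=V_S((E))$). With these tools the two implications fall out:

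For ($\Rightarrow$), writing $D_S(I)=\bigcup_{f\in I}D_S(f)$ and invoking quasi-compactness yields $f_1,\ldots,f_n\in I$ with $D_S(I)=D_S((f_1,\ldots,f_n))$, so by the correspondence between $V_S$ and $\sqrt[S]{\phantom{x}}$ (Proposition~\ref{Variety} and the subsequent proposition) we obtain $\sqrt[S]{I}=\sqrt[S]{(f_1,\ldots,f_n)}$, proving $I$ is $S$-radically finite. For ($\Leftarrow$), if $\sqrt[S]{I}=\sqrt[S]{(f_1,\ldots,f_n)}$ then $D_S(I)=D_S(f_1)\cup\cdots\cup D_S(f_n)$, so it suffices to show each $D_S(f)$ is quasi-compact. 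Given a basic cover $D_S(f)=\bigcup_\alpha D_S(g_\alpha)=D_S((g_\alpha)_\alpha)$, the equality $\sqrt[S]{(f)}=\sqrt[S]{(g_\alpha)_\alpha}$ places $f\in\sqrt[S]{(g_\alpha)_\alpha}$, so $sf^n=\sum_{i=1}^k r_ig_{\alpha_i}$ for some $s\in S$, $n\in\mathbb{N}$ and finitely many indices. A short verification (raising any test element to suitable powers and multiplying by $s$) shows this forces $\sqrt[S]{(f)}=\sqrt[S]{(g_{\alpha_1},\ldots,g_{\alpha_k})}$, producing the finite subcover $D_S(f)=D_S(g_{\alpha_1})\cup\cdots\cup D_S(g_{\alpha_k})$.

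The hardest step is the identity $D_S(I)=\bigcup_{f\in I}D_S(f)$, because in the $S$-setting the defining condition of $D_S(I)$ quantifies over all $s\in S$, whereas membership in some $D_S(f)$ needs a single $f$ that works uniformly in $s$; the $S$-prime witness $s_P$ is precisely what bridges this gap.
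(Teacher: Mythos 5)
Your proof is correct and follows essentially the same route as the paper: write $D_S(I)=\bigcup_{f\in I}D_S(f)$, extract a finite subcover for the forward direction, and reduce the converse to quasi-compactness of each basic open $D_S(f)$ via the $S$-radical membership equation $sf^n=\sum_i r_ig_{\alpha_i}$. The only difference is that you explicitly justify the identity $D_S(I)=\bigcup_{f\in I}D_S(f)$ with the $S$-prime witness $s_P$, a step the paper simply asserts (it also follows from parts (1) and (3) of Theorem 2.1 by taking complements), so no further changes are needed.
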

\begin{proof}
We have $D_S(I)=\cup_{a\in I}D_S(a)$. So if $D_S(I)$ is quasi-compact then $D_S(I)=D_S(a_1)\cup\ldots\cup D_S(a_m)$ for some $a_1,\ldots,a_m\in I$. Thus $D_S(I)=D_S(I_0)$ where $I_0=(a_1,\ldots,a_m)$, it follows from Proposition \ref{Variety} that $\sqrt[S]{I}=\sqrt[S]{I_0}$. Conversely, if $\sqrt[S]{I}=\sqrt[S]{(a_1,\ldots,a_m)}$ for some $a_1,\ldots,a_m\in I$, then $D_S(I)=D_S(a_1,\ldots,a_m)=D_S(a_1)\cup\ldots\cup D_S(a_m)$. It remains to show that each $D_S(a_i)$ is quasi-compact. Assume that $D_S(a_i)=\cup_{b\in C}D_S(b)$ which is an cover of $D_S(a_i)$ by a collection of basic open subsets. Then we have $D_S(a_i)=D_S(I')$ where $I'$ is the ideal generated by all $b\in C$.  This implies that $\sqrt[S]{(a_i)}=\sqrt[S]{I'}$. In particular $a\in \sqrt[S]{I'}$, and this implies that $sa_i=\displaystyle\sum_{k=1}^r\alpha_kb_k$ where $s\in S$, $\alpha_k\in A$ and $b_k\in C$. Thus $ D_S(a_i)=\cup_{k=1}^rD_S(b_k)$. 
\end{proof}
\begin{theorem}
The following statements are equivalents.
\begin{enumerate}
\item $\mathrm{Spec}_SR$ is noetherian with respect to the $S$-Zariski toology..
\item Every ideal of $R$ is $S$-radically finite.
\item For every prime ideal $P$ disjoint with $S$, $P=\sqrt[S]{I_0}$ where $I_0$ is a finitely generated sub-ideal of $P$.
\end{enumerate}
\end{theorem}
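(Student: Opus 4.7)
The argument has three main strands: $(1) \Leftrightarrow (2)$ is a topological rephrasing, $(2) \Rightarrow (3)$ is a near-triviality via the preceding lemma, and $(3) \Rightarrow (2)$ requires a Zorn's lemma argument producing a prime witness to failure.

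For $(1) \Leftrightarrow (2)$, I would invoke the standard characterization that a topological space is Noetherian if and only if every open set is quasi-compact. Since every open of the $S$-Zariski topology is of the form $D_S(I)$ and, by the theorem just above, $D_S(I)$ is quasi-compact exactly when $I$ is $S$-radically finite, the equivalence is immediate. For $(2) \Rightarrow (3)$, any prime $P$ disjoint from $S$ satisfies $\sqrt[S]{P} = P$ by Proposition~\ref{Variety}; hypothesis (2) and the preceding lemma then furnish a finitely generated sub-ideal $I_0 \subseteq P$ with $\sqrt[S]{I_0} = P$.

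The substance is $(3) \Rightarrow (2)$, which I would treat by contradiction. Let $\mathcal{F}$ be the family of ideals that are not $S$-radically finite and assume $\mathcal{F} \neq \emptyset$. A short check, again using the preceding lemma, shows that $\mathcal{F}$ is closed under unions of chains: if $I = \bigcup_\alpha I_\alpha$ and $\sqrt[S]{I} = \sqrt[S]{(a_1,\ldots,a_n)}$ with the generators chosen inside $I$, then all the $a_i$ lie in a common $I_\beta$, whence $\sqrt[S]{(a_1,\ldots,a_n)} \subseteq \sqrt[S]{I_\beta} \subseteq \sqrt[S]{I}$ forces equality, contradicting $I_\beta \in \mathcal{F}$. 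Zorn's lemma thus delivers a maximal $M \in \mathcal{F}$, and it suffices to show $\sqrt[S]{M}$ is a prime ideal disjoint from $S$: hypothesis (3) would then yield a finitely generated $I_0$ with $\sqrt[S]{M} = \sqrt[S]{I_0}$, contradicting $M \in \mathcal{F}$.

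Disjointness from $S$ is easy, since $M \cap S \neq \emptyset$ would force $\sqrt[S]{M} = R = \sqrt[S]{(1)}$ and already make $M$ $S$-radically finite. The delicate point is primality. Given $ab \in \sqrt[S]{M}$ with $a, b \notin \sqrt[S]{M}$, the ideals $M + (a)$ and $M + (b)$ strictly contain $M$, so by maximality there are finitely generated $A$ and $B$ with $\sqrt[S]{M+(a)} = \sqrt[S]{A}$ and $\sqrt[S]{M+(b)} = \sqrt[S]{B}$. The key chain of equalities is
\[
\sqrt[S]{M} = \sqrt[S]{M + (ab)} = \sqrt[S]{(M+(a))(M+(b))} = \sqrt[S]{AB},
\]
where the first equality follows from Proposition~\ref{Variety} since $ab$ already lies in $\sqrt[S]{M}$, and the last two come from the sandwich $(M+(a))(M+(b)) \subseteq M + (ab) \subseteq (M+(a)) \cap (M+(b))$ combined with the identities $V_S(IJ) = V_S(I \cap J) = V_S(I) \cup V_S(J)$ of Theorem~2.1(4). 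Since $AB$ is finitely generated this contradicts $M \in \mathcal{F}$. The main obstacle in the whole argument is assembling precisely this chain of $S$-radical equalities; everything else is formal manipulation of the definitions.
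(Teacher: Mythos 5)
Your proposal is correct and follows essentially the same route as the paper: $(1)\Leftrightarrow(2)$ via quasi-compactness of the $D_S(I)$, $(2)\Rightarrow(3)$ from $\sqrt[S]{P}=P$, and $(3)\Rightarrow(2)$ by Zorn's lemma applied to the family of non-$S$-radically-finite ideals, whose maximal element is shown to be prime (the paper proves $Q$ itself is prime from $ab\in Q$, while you prove $\sqrt[S]{M}$ is prime via the sandwich $(M+(a))(M+(b))\subseteq M+(ab)\subseteq (M+(a))\cap(M+(b))$ — a cosmetic variation on the same contradiction). The only point worth tightening is that disjointness of $\sqrt[S]{M}$ (not just $M$) from $S$ is what hypothesis $(3)$ requires; this follows from Proposition~2.4(4) together with idempotence of the $S$-radical, but deserves a line.
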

\begin{proof}
$(1)\Rightarrow(2)$.  Follows immediately from the previous lemma.\\
$(2)\Rightarrow (3)$. This follows from the fact that, if $P$ is a prime ideal of $R$ disjoint with $S$ then $P=\sqrt[S]{P}$. \\
$(3)\Rightarrow(2)$. Let $\Gamma$ be the set of all ideals $I$ of $R$ disjoint with $S$ such that $\sqrt[S]{I}\ne \sqrt[S]{J}$ for any finitely generated sub-ideal $J$ of $I$. Assume to the contrary that $\Gamma\ne \emptyset$. Let $(J_n)_n$ be an ascending chain of elements of $\Gamma$ and set $J=\cup_nJ_n$. Then $J\in \Gamma$. In fact, if $J\not \in \Gamma$, then $\sqrt[S]{J}=\sqrt[S]{(j_1,\ldots,j_m)}$ for some $j_i\in J$. Since $J=\cup_nJ_n$, $j_1,\ldots,j_m\in J_N$ for some $N\in \Bbb N$, so $\sqrt[S]{J_N}=\sqrt[S]{(j_1,\ldots,j_m)} $ which is not possible. Now, by Zorn's lemma, $\Gamma$ has a maximal element, say $Q$. Now, we will show that $Q$ is a prime ideal of $R$. Assume $ab\in Q$ for some $a,b\in R-Q$. By maximality, $\sqrt[S]{Q+(a)}=\sqrt[S]{(a_1,\ldots,a_m)}$ and $\sqrt[S]{Q+(b)}=\sqrt[S]{(b_1,\ldots,b_n)}$ for some $a_i\in Q+(a)$ and $b_i\in Q+(b)$. It follows that $\sqrt[S]{Q}=\sqrt[S]{(Q+(a))(Q+(b))}=\sqrt[S]{(a_ib_j), i\in \{1,\ldots,m\}, j\in \{1,\ldots,n\}}$  which is not compatible with that fact that $Q\in \Gamma$. Since $Q$ is a prime ideal in particular $S$-prime ideal, a contradiction.\\
$(2)\Rightarrow (1)$ from the previous lemma. 
\end{proof}
\begin{theorem}
$\mathrm{Spec}_SR$ is noetherian with respect to the $S$-Zariski topology if and only if $R$ satisfies the ascending chain condition on the $S$-radical ideals .
\end{theorem}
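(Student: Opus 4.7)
The plan is to reduce the statement to a purely order-theoretic observation via the correspondence between $S$-radical ideals and $S$-Zariski closed subsets already established in the preceding propositions. Recall that a topological space is noetherian exactly when its closed subsets satisfy the descending chain condition.

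First, I will set up an order-reversing bijection
\[
\Phi\colon\bigl\{S\text{-radical ideals of }R\bigr\}\longrightarrow\bigl\{S\text{-Zariski closed subsets of }\mathrm{Spec}_SR\bigr\},\qquad I\longmapsto V_S(I).
\]
Surjectivity is clear: every closed subset of $\mathrm{Spec}_SR$ is of the form $V_S(J)$ for some ideal $J$, and by the identity $V_S(J)=V_S(\sqrt[S]{J})$ we may represent it via the $S$-radical ideal $\sqrt[S]{J}$. Injectivity and the order-reversing property both follow from Proposition following the one labelled \textbf{Variety}: for $S$-radical ideals $I,J$ we have $V_S(I)\subseteq V_S(J)\iff \sqrt[S]{J}\subseteq \sqrt[S]{I}\iff J\subseteq I$, and in particular $V_S(I)=V_S(J)\iff I=J$.

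Next I translate chains. Suppose $\mathrm{Spec}_SR$ is noetherian. Given an ascending chain $I_1\subseteq I_2\subseteq\cdots$ of $S$-radical ideals of $R$, applying $\Phi$ yields a descending chain $V_S(I_1)\supseteq V_S(I_2)\supseteq\cdots$ of closed subsets, which must stabilize; by the injectivity of $\Phi$, the original chain stabilizes as well. Conversely, if $R$ satisfies ACC on $S$-radical ideals, then any descending chain $F_1\supseteq F_2\supseteq\cdots$ of closed subsets can be written as $F_n=V_S(I_n)$ with $I_n$ $S$-radical; the chain $I_1\subseteq I_2\subseteq\cdots$ then stabilizes, and so does $(F_n)$.

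There is no real obstacle here: once the order-reversing bijection between $S$-radical ideals and closed subsets is made explicit, the equivalence is immediate. The only point requiring a moment of care is the verification that $\Phi$ is a well-defined bijection on the nose (so that chains transport faithfully), for which the previously proved equivalence $V_S(I)=V_S(J)\iff\sqrt[S]{I}=\sqrt[S]{J}$ is precisely what is needed.
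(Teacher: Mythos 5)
Your proof is correct, but it takes a genuinely different and more economical route than the paper's. The paper argues through its preceding characterization of noetherianity (``every ideal of $R$ is $S$-radically finite''): for the forward direction it applies that characterization to the union $I=\bigcup_n I_n$ of an ascending chain of $S$-radical ideals to produce $a_1,\dots,a_n\in I$ with $I=\sqrt[S]{(a_1,\dots,a_n)}$, all of which land in some $I_N$, forcing $I_N=I$; for the converse it runs a maximal-element argument on the collection of $S$-radically finite sub-ideals to show that ACC on $S$-radical ideals makes every ideal $S$-radically finite, and then invokes the earlier theorem. You instead make the order-reversing bijection $I\mapsto V_S(I)$ between $S$-radical ideals and $S$-Zariski closed sets explicit and transport the descending chain condition on closed sets directly into ACC on $S$-radical ideals, bypassing both the notion of $S$-radical finiteness and the maximality argument; this is cleaner for this particular equivalence, while the paper's detour has the side benefit of tying the statement to the quasi-compactness characterization of the basic opens that it uses elsewhere. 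Two small points you should spell out, neither of which is a real gap: surjectivity of $\Phi$ needs that $\sqrt[S]{J}$ is itself an $S$-radical ideal, i.e.\ $\sqrt[S]{\sqrt[S]{J}}=\sqrt[S]{J}$, which is immediate from Proposition \ref{Variety} because a prime disjoint from $S$ contains $J$ if and only if it contains $\sqrt[S]{J}$; and the extreme closed sets are covered since $\emptyset=V_S(R)$ and $\mathrm{Spec}_SR=V_S(\sqrt[S]{0})$ with $R$ and $\sqrt[S]{0}$ both $S$-radical.
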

\begin{proof}
Assume that $\mathrm{Spec}_SR$ is noetherian and let $ (I_n)_n$ be an ascending chain of $S$(radical ideals. Set $I=\cup_nI_n$. Then $I$ is an $S$-radical ideal. Sine $\mathrm{Spec}_S$ is noetherian, $I=\sqrt[S]{(a_1,\ldots,a_n)}$ for some $a_1,\ldots,a_n\in I=\cup_nI_n$. So $a_1,\ldots,a_n\in I_N$ for some $N\in \Bbb N$, it follows that $I_N=I$ that is the chain $I_n$ stabilize. For the converse, assume that $R$ satisfies the ascending chain condition on $S$-radical ideals. Let $I$ be an ideal. The collection of all $S$-radically finite sub-ideal of $\sqrt{I}$ has maximal element, say $J$. The maximality, implies that $\sqrt[S]{I}=J$ and $I$ is $S$-radically finite.   
\end{proof}
\section{\bf The $S$-flat topology}
Let $R$ be a commutative ring. Then there is a unique topology over $\mathrm{Spec}R$ such that the collection of subsets $ V(f)$ with $f\in R$ forms a sub-basis for the opens of this topology. It is called the flat topology. Therefore the collection $V(I)$ where $I$ runs through the set of finitely generated ideals of $R$ forms a basis for the flat opens, for more details see \cite{DoFo},\cite{Tar}. Motivated by this concept, we generalize the flat topology to the $S$-prime spectrum $\mathrm{Spec}_SR$ of a commutative ring $R$. Let $S$ be a multiplicative subset of $R$. Then there is a unique topology over $\mathrm{Spec}_SR$ such that the collection $V_S(f)$ with $f\in R$ forms a sub-basis for the opens of this topology. It is called the $S$-flat topology. Therefore the collection $V_S(I)$ where $I$ runs through the set of finitely generated ideals of $R$ forms a basis for the $S$-flat opens. Note that, if $S\subseteq u(R)$ (consist of units of $R$) then $\mathrm{Spec}_SR=\mathrm{Spec}$ and moreover the $S$-flat topology and the flat topology are the same. 
\begin{remark} If $P$ is an $S$-prime ideal of $R$ then there exists $s_p\in S$ such that $(P:s_p)$ is a prime ideal, moreover, if $(P:s_p')$ is also a prime ideal form some $s_p'\in S$, then $(P:s_p)=(P:s_p')$. In fact, let $a\in (P:s_p)$ then $s_pa\in P\subseteq (P:s_p')$, so $a\in (P:s_p' $ since $a_p\not\in (P:s_p')$, that is $(P:s_p)\subseteq (P:s_p')$. By the same argument, we have $ (P:s_p)=(P:s_p')$.
\end{remark}
\begin{proposition}
The map $\mathrm{Spec}_SR\to \mathrm{Spec}R$, $P\mapsto (P:s_p)$ is continuous with respect to the $S$-flat topology and flat topology.
\end{proposition}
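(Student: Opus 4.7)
The plan is to verify continuity by checking the preimages of a sub-basis. Since the flat topology on $\mathrm{Spec}R$ is generated by the sub-basic opens $V(f)$ with $f\in R$, it suffices to show that for every $f\in R$, the preimage $\varphi^{-1}(V(f))$ under the map $\varphi:P\mapsto (P:s_P)$ is open in the $S$-flat topology on $\mathrm{Spec}_SR$. Unwinding the definitions, a prime $P\in \mathrm{Spec}_SR$ lies in this preimage exactly when $f\in (P:s_P)$, i.e.\ when $s_Pf\in P$.

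The central claim is therefore the identity
\[
\varphi^{-1}(V(f))=V_S(f)=\{P\in \mathrm{Spec}_SR\ /\ sf\in P\text{ for some }s\in S\}.
\]
One inclusion is trivial: if $s_Pf\in P$ then $s_P\in S$ witnesses $P\in V_S(f)$. The reverse inclusion is the only place where work is needed, and it is precisely where the defining property of an $S$-prime ideal enters. Suppose some $s\in S$ satisfies $sf\in P$. Apply the $S$-prime definition to the product $s\cdot f\in P$: it yields either $s_Ps\in P$ or $s_Pf\in P$. The first alternative is impossible because $s_Ps\in S$ and $P\cap S=\emptyset$, so we conclude $s_Pf\in P$, as desired.

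Once this equality is established, the result is immediate: $V_S(f)$ is a sub-basic open of the $S$-flat topology by construction, so $\varphi^{-1}(V(f))$ is open, and continuity follows. The main obstacle, and the only non-formal step, is the implication $sf\in P\Rightarrow s_Pf\in P$; note that this is a genuine use of the uniform-witness formulation of $S$-primeness (a single $s_P$ serves all products in $P$), together with the multiplicative closure of $S$ to rule out $s_Ps\in P$. Everything else reduces to the standard fact that continuity need only be tested against a sub-basis of the target topology.
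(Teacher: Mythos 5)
Your proof is correct and follows essentially the same route as the paper: both reduce continuity to the sub-basic identity $\varphi^{-1}(V(f))=V_S(f)$, where the only substantive step is upgrading ``$sf\in P$ for some $s\in S$'' to ``$s_Pf\in P$''. The paper obtains this from the primality of $(P:s_P)$ (noting $s\notin(P:s_P)$ since $ss_P\in S$ and $P\cap S=\emptyset$), whereas you invoke the uniform witness in the definition of $S$-prime; this is fine provided $s_P$ is taken to be that witness, which is the natural choice and, by the paper's Remark, does not change the ideal $(P:s_P)$.
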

\begin{proof}
Let $a\in R$. We have $\{P\in \mathrm{Spec}_SR\ / (P:s_p)\in V(a)\}=V_S(a)$. In fact, if $(P:s_p)\in V(a)$ then $s_pa\in P$, so $P\in V_S(a)$. Conversely, if $P\in V_S(a)$ then $s'a\in P$ for some $s'\in S$, so $s'a\in (P:s_p)$, hence $a\in (P:s_p)$. Thus $(P:s_p)\in V(a)$.
\end{proof}
\begin{proposition}
Let $\varphi:R\to R'$ be a morphism of rings such that $0\not\in \varphi(S)$. Then the induced morphism $\varphi_S:\mathrm{Spec}_SR'\to \mathrm{Spec}_SR$ is continuous with respect the $S$-flat and $\varphi(S)$-flat topologies.
\end{proposition}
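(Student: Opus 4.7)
The plan is to proceed in two steps: first verify that the contraction map $\varphi_S(P') = \varphi^{-1}(P')$ is well-defined, sending $\varphi(S)$-prime ideals of $R'$ to $S$-prime ideals of $R$; second, check continuity on a sub-basis of the $S$-flat topology. Throughout I read the domain of the topological statement as $\mathrm{Spec}_{\varphi(S)}R'$ equipped with the $\varphi(S)$-flat topology, which is legitimate because $0\notin \varphi(S)$ and $\varphi$ being a ring homomorphism forces $\varphi(S)$ to be multiplicatively closed in $R'$.

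For the well-definedness step, fix $P' \in \mathrm{Spec}_{\varphi(S)}R'$. The disjointness $P' \cap \varphi(S) = \emptyset$ immediately gives $\varphi^{-1}(P') \cap S = \emptyset$, since any $s \in \varphi^{-1}(P') \cap S$ would yield $\varphi(s) \in P' \cap \varphi(S)$. By hypothesis there exists $t' \in \varphi(S)$, say $t' = \varphi(t)$ with $t \in S$, such that $a'b' \in P'$ forces $t'a' \in P'$ or $t'b' \in P'$. Applying this witness to $a' = \varphi(a)$ and $b' = \varphi(b)$, I would conclude that $ab \in \varphi^{-1}(P')$ implies $ta \in \varphi^{-1}(P')$ or $tb \in \varphi^{-1}(P')$, so $\varphi^{-1}(P')$ is $S$-prime.

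For continuity, I would exploit the fact that $\{V_S(f) : f \in R\}$ is by construction a sub-basis for the $S$-flat topology on $\mathrm{Spec}_SR$. It then suffices to show that each preimage $\varphi_S^{-1}(V_S(f))$ is open in the $\varphi(S)$-flat topology. Unravelling the definitions, $P' \in \varphi_S^{-1}(V_S(f))$ iff there exists $s \in S$ with $sf \in \varphi^{-1}(P')$, iff there exists $s \in S$ with $\varphi(s)\varphi(f) \in P'$, iff $P' \in V_{\varphi(S)}(\varphi(f))$. Hence $\varphi_S^{-1}(V_S(f)) = V_{\varphi(S)}(\varphi(f))$, a sub-basic open of the target topology, and continuity follows.

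I do not anticipate a genuine obstacle: the argument is a formal manipulation of the definitions. The only item deserving care is the well-definedness step, where the witness $t' \in \varphi(S)$ for $P'$ must descend to a witness $t \in S$ for $\varphi^{-1}(P')$; this descent is automatic because, by definition of $\varphi(S)$, every $t'$ is of the form $\varphi(t)$ with $t \in S$.
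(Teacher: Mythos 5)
Your proof is correct and follows essentially the same route as the paper: both reduce continuity to the identity $\varphi_S^{-1}(V_S(-))=V_{\varphi(S)}(\varphi(-))$ on (sub-)basic opens, except that the paper cites this identity for finitely generated ideals from the $S$-Zariski literature while you verify it directly for single elements $f\in R$. Your additional check that contraction sends $\varphi(S)$-prime ideals of $R'$ to $S$-prime ideals of $R$, so that $\varphi_S$ is well defined, is a detail the paper leaves implicit and is worth recording.
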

\begin{proof}
If $I$ is a finitely generated ideal of $R$ then by \cite[Theorem 3]{Eda} $\varphi_S^{-1}(V_S(I))=V_{\varphi(S)}(\varphi(I)R')$. Since $\varphi(I)R'$ is a finitely generated ideal of $R'$, $V_{\varphi(S)}(\varphi(I)R') $ is open with respect to the $S$-flat topology. Therefore, $\varphi_S$ is continuous with respect to the $S$-flat and $\varphi(S)$-flat topologies.  
\end{proof}
\iffalse
\begin{example} Let $R=\Bbb Z$ and $S=\{2^n / \ n\in \Bbb N\}$. Then the $S$-prime ideals are $(2^kp)$ where $k\in \Bbb N$ and $p\in \Bbb Z$ is a prime integer. Let $I=(a)$ be an ideal of $\Bbb Z$. If $a=0$ then clearly $V_S(I)=\mathrm{Spec}_S\Bbb Z$. If $a\ne 0$, let $p_1,\ldots,p_r$ the collection of all prime divisors of $a$ then $V_S(I)=\cup_{i=1}^r\{(2^kp_i)\ / k\in \Bbb N\}$.   
\end{example}
\fi     
\section{\bf Compactness and irreducibility }
We start this section with the following useful lemma.
\begin{lemma}
Let $R$ be a commutative ring and $S$ be a multiplicatively closed subset of $R$. If $I,J$ are ideals of $R$ then $\sqrt[S]{\sqrt[S]{I}\sqrt[S]{J}}=\sqrt[S]{IJ}$. 
\end{lemma}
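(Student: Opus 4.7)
The plan is to establish the auxiliary identity $\sqrt[S]{IJ}=\sqrt[S]{I}\cap \sqrt[S]{J}$, and then sandwich $\sqrt[S]{I}\sqrt[S]{J}$ between $IJ$ and that intersection. Before doing anything else, I would record two routine facts about the $S$-radical that follow directly from its definition: monotonicity (if $K\subseteq L$ then $\sqrt[S]{K}\subseteq\sqrt[S]{L}$) and idempotence, $\sqrt[S]{\sqrt[S]{K}}=\sqrt[S]{K}$. For the latter, if $s(s'x^n)^m\in K$ with $s,s'\in S$ and $n,m\in\Bbb N$, then $s\, s'^{m}\, x^{nm}\in K$ with $s\, s'^{m}\in S$, so $x\in\sqrt[S]{K}$.

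Next I would prove the key identity $\sqrt[S]{IJ}=\sqrt[S]{I}\cap\sqrt[S]{J}$ directly. The inclusion $\subseteq$ is immediate from monotonicity since $IJ\subseteq I$ and $IJ\subseteq J$. For the reverse, given $x\in\sqrt[S]{I}\cap\sqrt[S]{J}$, pick $s,t\in S$ and $n,m\in\Bbb N$ with $sx^{n}\in I$ and $tx^{m}\in J$; then $(st)x^{n+m}=(sx^{n})(tx^{m})\in IJ$ and $st\in S$, so $x\in\sqrt[S]{IJ}$.

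With these tools in place both inclusions of the lemma become short. For $\sqrt[S]{IJ}\subseteq\sqrt[S]{\sqrt[S]{I}\sqrt[S]{J}}$: since $I\subseteq\sqrt[S]{I}$ and $J\subseteq\sqrt[S]{J}$, one has $IJ\subseteq\sqrt[S]{I}\sqrt[S]{J}$, and monotonicity delivers the inclusion. For the opposite direction, $\sqrt[S]{I}\sqrt[S]{J}\subseteq\sqrt[S]{I}\cap\sqrt[S]{J}=\sqrt[S]{IJ}$; applying $\sqrt[S]{\,\cdot\,}$ to both sides and using idempotence on the right yields $\sqrt[S]{\sqrt[S]{I}\sqrt[S]{J}}\subseteq\sqrt[S]{IJ}$.

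I do not expect any real obstacle: the entire argument reduces to the elementary product identity $\sqrt[S]{IJ}=\sqrt[S]{I}\cap\sqrt[S]{J}$ together with the idempotence of $\sqrt[S]{\,\cdot\,}$. The only mild bookkeeping is making sure idempotence is applied in the correct direction when passing from the inclusion $\sqrt[S]{I}\sqrt[S]{J}\subseteq\sqrt[S]{IJ}$ to the desired $S$-radical statement; Proposition \ref{Variety} is not strictly needed, although it would give an alternative proof of the product identity by noting that the primes disjoint from $S$ over $IJ$ are exactly those over $I$ or $J$.
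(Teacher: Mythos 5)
Your proof is correct, but it is organized around a different key identity than the paper's. The paper attacks the containment $\sqrt[S]{I}\sqrt[S]{J}\subseteq\sqrt[S]{IJ}$ head-on: it takes a general element $x=\sum_k a_kb_k$ with $a_k\in\sqrt[S]{I}$ and $b_k\in\sqrt[S]{J}$, checks that $s_kt_k(a_kb_k)^{n_k+m_k}\in IJ$ so that each $a_kb_k\in\sqrt[S]{IJ}$ and hence so is the sum, and then applies $\sqrt[S]{\,\cdot\,}$ together with (implicit) idempotence, exactly as you do at the end. You instead route through the product identity $\sqrt[S]{IJ}=\sqrt[S]{I}\cap\sqrt[S]{J}$ combined with the trivial containment $\sqrt[S]{I}\sqrt[S]{J}\subseteq\sqrt[S]{I}\cap\sqrt[S]{J}$. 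The underlying computation is the same in both arguments --- your $(sx^{n})(tx^{m})\in IJ$ is the paper's $s_kt_k(a_kb_k)^{n_k+m_k}\in IJ$ in disguise --- but your decomposition buys a reusable fact (the $S$-analogue of the classical identity $\sqrt{IJ}=\sqrt{I}\cap\sqrt{J}$) and lets you avoid handling finite sums explicitly; the cost is that the containment $\sqrt[S]{I}\sqrt[S]{J}\subseteq\sqrt[S]{I}\cap\sqrt[S]{J}$ quietly uses that $\sqrt[S]{I}$ and $\sqrt[S]{J}$ are ideals, which is available from Proposition \ref{Variety} (and the paper's sum-of-products step uses the corresponding fact for $\sqrt[S]{IJ}$ just as silently). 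Your explicit statement of monotonicity and idempotence also makes the final passage from $\sqrt[S]{I}\sqrt[S]{J}\subseteq\sqrt[S]{IJ}$ to $\sqrt[S]{\sqrt[S]{I}\sqrt[S]{J}}\subseteq\sqrt[S]{IJ}$ cleaner than in the paper, where it is asserted without comment.
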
 
\begin{proof}
Since $IJ\subseteq \sqrt[S]{I}\sqrt[S]{J}$, we get $\sqrt[S]{IJ}\subseteq \sqrt[S]{\sqrt[S]{I}\sqrt[S]{J}}$. Let $x\in \sqrt[S]{I}\sqrt[S]{J}$, then $x=\displaystyle\sum_{k=1}^ra_kb_k $ where $a_k\in \sqrt[S]{I}$ and $b_k\in\sqrt[S]{J}$. For each $k$, $s_ka_k^{n_k}\in I$ and $t_kb_k^{m_k}\in J$ for some $s_k,t_k\in S$ and $n_k,m_k\in \Bbb N$. We have $t_ks_k(a_kb_k)^{n_k+m_k}\in IJ$, so $a_kb_k\in \sqrt[S]{IJ}$. It follows that $x\in \sqrt[S]{IJ}$. Consequently, $\sqrt[S]{I}\sqrt[S]{J}\subseteq  \sqrt[S]{IJ}$. Thus $\sqrt[S]{\sqrt[S]{I}\sqrt[S]{J}}=\subseteq \sqrt[S]{IJ}$.
\end{proof}
\begin{theorem}\label{Compact}
Let $I$ be an ideal of $R$. Then $V_S(I)$ is quasi-compact  subset of $\mathrm{Spec}_SR$ with respect to the $S$-flat topology, that is, every closed subset with respect to the $S$-Zariski topology is a quasi-compact subset with respect to the $S$-flat topology. 
\end{theorem}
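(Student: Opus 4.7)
My plan is to apply Alexander's sub-basis lemma to the induced topology on $V_S(I)$, taking advantage of the fact that the sets $V_S(f)$ with $f\in R$ form a sub-basis of the $S$-flat topology. So it suffices to show that any cover of $V_S(I)$ by sub-basic open sets has a finite sub-cover; that is, whenever $V_S(I)\subseteq \bigcup_{\alpha\in A}V_S(f_\alpha)$, there exist $\alpha_1,\ldots,\alpha_n\in A$ with $V_S(I)\subseteq V_S(f_{\alpha_1})\cup\cdots\cup V_S(f_{\alpha_n})$.

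The central trick is to form the multiplicative subset
\[
M=\{\,sf_{\alpha_1}\cdots f_{\alpha_n}\ :\ s\in S,\ n\ge 0,\ \alpha_i\in A\,\}
\]
generated by $S$ together with the family $(f_\alpha)_\alpha$, and show that $M\cap I\neq\emptyset$. I would argue by contradiction: if $M\cap I=\emptyset$, then by the standard Zorn's lemma argument there exists a prime ideal $\mathfrak{p}\supseteq I$ with $\mathfrak{p}\cap M=\emptyset$. Since $S\subseteq M$, this $\mathfrak{p}$ is disjoint from $S$, so $\mathfrak{p}$ is itself an $S$-prime ideal (with witness $1\in S$) and clearly $\mathfrak{p}\in V_S(I)$. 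On the other hand, for every $\alpha$ we have $f_\alpha\notin\mathfrak{p}$; combined with $S\cap\mathfrak{p}=\emptyset$ and the primality of $\mathfrak{p}$, this rules out $sf_\alpha\in\mathfrak{p}$ for any $s\in S$, so $\mathfrak{p}\notin V_S(f_\alpha)$ for any $\alpha$. This contradicts the assumption that the $V_S(f_\alpha)$ cover $V_S(I)$.

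Hence $M\cap I\neq\emptyset$, and unwinding the definition of $M$ there exist $s\in S$ and $\alpha_1,\ldots,\alpha_n\in A$ with $sf_{\alpha_1}\cdots f_{\alpha_n}\in I$. This means $f_{\alpha_1}\cdots f_{\alpha_n}\in\sqrt[S]{I}$, so by Proposition~2.4 we have $V_S(I)=V_S(\sqrt[S]{I})\subseteq V_S(f_{\alpha_1}\cdots f_{\alpha_n})$, and by the finite-union property $V_S(IJ)=V_S(I)\cup V_S(J)$ from the preliminary theorem this equals $V_S(f_{\alpha_1})\cup\cdots\cup V_S(f_{\alpha_n})$, giving the desired finite sub-cover.

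The most delicate point is the verification that the prime $\mathfrak{p}$ produced in the contradiction step really does lie outside every $V_S(f_\alpha)$: this uses both $\mathfrak{p}\cap S=\emptyset$ (to kill the factor $s\in S$) and the primality of $\mathfrak{p}$, which together translate the apparently weaker condition ``$sf_\alpha\in\mathfrak{p}$ for some $s$'' back into ``$f_\alpha\in\mathfrak{p}$''. Once this translation is in place, the rest is routine. I do not anticipate serious difficulty beyond keeping track of the distinction between $S$-prime ideals and honest primes disjoint from $S$.
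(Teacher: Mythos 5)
Your proof is correct, but it takes a genuinely different route from the paper's. You reduce to sub-basic covers via Alexander's sub-basis lemma and then apply the classical fact that an ideal disjoint from a multiplicative set is contained in a prime disjoint from that set, applied to the multiplicative set $M$ generated by $S$ and the $f_\alpha$; the verification that the resulting prime $\mathfrak{p}$ avoids every $V_S(f_\alpha)$ is exactly as you describe. The paper instead works with the basis $\{V_S(J): J \text{ finitely generated}\}$, closes the covering family under finite products, and runs a bespoke Zorn's lemma argument on the poset of ideals $L$ disjoint from $S$ with $\sqrt[S]{J}\not\subseteq\sqrt[S]{L}$ for all $J$ in the enlarged family, showing a maximal such $L$ is prime; this requires the auxiliary identity $\sqrt[S]{\sqrt[S]{I}\sqrt[S]{J}}=\sqrt[S]{IJ}$ proved just before the theorem. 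Your version is shorter and outsources the transfinite bookkeeping to two standard results, at the cost of invoking Alexander's lemma; the paper's version is self-contained and its ``maximal element of $\Gamma$ is prime'' technique is reused in the noetherianity results of Section 2. Two small points you should make explicit: when $M\cap I=\emptyset$ the condition $0\notin M$ is automatic (since $0\in I$), so the prime-existence lemma applies; and in the final step the degenerate case $n=0$, i.e.\ $s\in I\cap S$, just means $V_S(I)=\emptyset$, which is covered by the empty subfamily.
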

\begin{proof}
If $I\cap S\ne \emptyset$ then $V_S(I)=\emptyset$ is quasi-compact.  For $I\cap S=\emptyset$ assume that  $V_S(I)\subseteq \cup_{J\in K}V_S(J)$ which  is an $S$-flat open cover of $V_S(I)$ where each $J\in K$ is a finitely generated ideal of $R$. Let $K'$ be the set of all ideals of the form $\prod_{J\in K_0}J$ where $K_0$ is a finite subset of $K$. The  product of two elements of $K'$ is an element of $K'$ and every element of $K'$ is finitely generated, moreover $\cup_{J\in K}V_S(J)=\cup_{J\in K'}V_S(J)$ in fact since $K\subseteq K'$, we have $\cup_{J\in K}V_S(J)\subseteq \cup_{J\in K'}V_S(J)$. For $J'\in K'$ we have $J'=J_1\ldots J_r$ where $J_i\in K$, so $V_S(J')=V_S(J_1)\cup \ldots\cup V_S(J_r)\subseteq \cup_{J\in K}V_S(J)$. Now, we show that $V_S(I)\subseteq V_S(J)$ for some $J\in K'$. Suppose to the contrary that $V_S(I)\not\subseteq V_S(J)$ for all $J\in K'$, in term of $S$-radical, $\sqrt[S]{J}\not\subseteq \sqrt[S]{I}$ for all $J\in K'$. Consider
$$\Gamma=\{ L\ / L \text{ ideal of } R, L\cap S=\emptyset, \text{ such that for all } J\in K', \sqrt[S]{J}\not\subseteq \sqrt[S]{L}\}$$ 
Note that $\Gamma$ is a nonempty set since $I\in \Gamma$. Let $(L_n)_n$ be a ascending chain of elements of $\Gamma$ and set $L=\cup_nL_n$. Clearly $L\cap S=\cup_n(L_n\cap S)=\emptyset$. If $L\not\in \Gamma$  then $\sqrt[S]{J}\subseteq  \sqrt[S]{L}$. Since $J$ is finitely generated, $J=(j_1,\ldots,j_m)$ for some $j_1,\ldots,j_m\in J$. For each $1\le i\le m$, $s_ij_i^{n_i}\in L$ for some $s_i\in S$ and $n_i\in \Bbb N$. Since $(L_n)_n$ is an ascending chain, there exists an $L_{N}$ containing all $s_ij_i^{n_i}$, where $1\le i\le m$. So each $j_i$ belong to $\sqrt[S]{L_{N}}$, hence $\sqrt[S]{J}\subseteq \sqrt[S]{L_{N}}$ with is not compatible with the fact that $L_{N}\in \Gamma$. It follows that $L\in \Gamma$. By Zorn's lemma, $\Gamma$ has a maximal element, say $P$. Now, we show that $P$ is a prime ideal disjoint with $S$ and in particular $S$-prime ideal. Assume that $ab\in P$ with $a,b\not\in P$. Consider the two ideals $P_a=P+(a)$ and $P_b=P+(b)$. Since $P\subsetneq P_a$ and $P\subsetneq P_b$, we get  $P_a,P_b\not\in \Gamma$. So $\sqrt[S]{J_1}\subseteq \sqrt[S]{P_a}$ and $\sqrt[S]{J_2}\subseteq \sqrt[S]{P_b}$ for some $J_1,J_2\in K'$, hence $\sqrt[S]{J_1}\sqrt[S]{J_2}\subseteq \sqrt[S]{P_a}\sqrt[S]{P_b}$, now $\sqrt[S]{\sqrt[S]{J_1}\sqrt[S]{J_2}}\subseteq \sqrt[S]{\sqrt[S]{P_a}\sqrt[S]{P_b}} $, from the previous lemma,  $\sqrt[S]{J_1J_2}\subseteq \sqrt[S]{P_aP_b}$. It is easy to see that $J_3=J_1J_2\in K'$ and $\sqrt[S]{P_aP_b}\subseteq \sqrt[S]{P^2 }=\sqrt[S]{P}$, that is $\sqrt[S]{J_3}\subseteq \sqrt[S]{P}$ where $J_3\in K'$ which is not compatible with $P\in \Gamma$. As a consequence, $P$ is a prime ideal disjoint with $S$, so an $S$-prime ideal of $R$. Now, we see that $P\in V_S(I)$ but $P\not\in \cup_{J\in K'}V_S(J)$ a contradiction.  There exists $J\in K'$ such that $V_S(I)\subseteq V_S(J)$. Since $J\in K'$, $J=J_1\ldots J_r$ for some $J_1,\ldots,J_r\in K$, so $V_S(I)\subseteq V_S(J_1)\cup \ldots\cup V_S(J_r)$ as desired. 
\end{proof}
\begin{corollary}\label{qcomact}
\begin{enumerate} 
\item The quasi-compact open subsets of $\mathrm{Spec}_SR$ with respect to the $S$-flat topology are $V_S(I)$ where $I$ is a finitely generated ideal of $R$. 
\item $\mathrm{Spec}_SR$ is quasi-compact with respect to the $S$-flat topology.
\item Let $I$ be a ideal of $R$. Then $V_S(I)$ is an open with respect to the $S$-flat topology if and only if $\sqrt[S]{I}=\sqrt[S]{J}$ for some finitely generated ideal $J$.
\end{enumerate}
\end{corollary}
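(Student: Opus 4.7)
The plan is to reduce all three parts to Theorem \ref{Compact} together with the two structural facts about $S$-flat basic opens: the collection $\{V_S(J) : J\text{ finitely generated}\}$ is by definition a basis for the $S$-flat topology, and by Theorem 2.1(4) one has $V_S(J_1)\cup\cdots\cup V_S(J_n)=V_S(J_1\cdots J_n)$, where the product of finitely many finitely generated ideals is again finitely generated.

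For part (2) I would observe that $\mathrm{Spec}_SR=V_S((0))$ is $S$-Zariski closed, hence $S$-flat quasi-compact by Theorem \ref{Compact}. For the forward direction of part (1), any basic open $V_S(J)$ with $J$ finitely generated is also $S$-Zariski closed, and so is $S$-flat quasi-compact again by Theorem \ref{Compact}. For the converse direction, suppose $U$ is a quasi-compact $S$-flat open. Express $U$ as a union of basic opens $U=\bigcup_{\alpha}V_S(J_\alpha)$ with each $J_\alpha$ finitely generated; quasi-compactness extracts a finite subcover, and then Theorem 2.1(4) lets me collapse it to a single $V_S(J)$ with $J=J_{\alpha_1}\cdots J_{\alpha_n}$, which is finitely generated.

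For part (3), the "only if" direction uses Theorem \ref{Compact} to deduce that the $S$-Zariski closed set $V_S(I)$, now assumed $S$-flat open, is $S$-flat quasi-compact, whence by part (1) there is a finitely generated ideal $J$ with $V_S(I)=V_S(J)$; Proposition 2.4(2) then upgrades this to $\sqrt[S]{I}=\sqrt[S]{J}$. The "if" direction is the immediate reverse: $\sqrt[S]{I}=\sqrt[S]{J}$ forces $V_S(I)=V_S(J)$ via Proposition 2.4(2), and the right-hand side is a basic $S$-flat open.

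No genuine obstacle arises once Theorem \ref{Compact} is in hand; the only mild subtlety is in the converse of (1), where one must remember to pass from the intersection $J_1\cap\cdots\cap J_n$ (which need not be finitely generated) to the product $J_1\cdots J_n$ using the alternative form $V_S(I)\cup V_S(J)=V_S(IJ)$ from Theorem 2.1(4), so as to keep finite generation intact.
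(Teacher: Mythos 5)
Your proposal is correct and follows essentially the same route as the paper: both reduce everything to Theorem \ref{Compact}, obtain part (1) by extracting a finite subcover of basic opens and collapsing it via $V_S(J_1)\cup\cdots\cup V_S(J_n)=V_S(J_1\cdots J_n)$, and deduce part (3) from part (1) together with the correspondence $V_S(I)=V_S(J)\Leftrightarrow\sqrt[S]{I}=\sqrt[S]{J}$. Your explicit remark about using the product rather than the intersection to preserve finite generation is exactly the point the paper's proof relies on implicitly.
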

\begin{proof}
\begin{enumerate}
\item If $I$ is a finitely generated ideal of $R$ then $V_S(I)$ is quasi-compact and open  with respect to the $S$-flat topology. Conversely, let $U$ be a quasi-compact open with respect to the $S$-flat topology. Since $U=\cup_{t}V_S(I_t)$ where $I_t$ are finitely generated ideals and $U$ quasi-compact, it follows that $U=\cup_{1\le k\le r}V_S(I_{t_i})=V_S(\prod_{1\le k\le r}I_{t_i})$. The result follows from the fact that $\prod_{1\le k\le r}I_{t_i} $ is finitely generated.
\item No comment.
\item If $ \sqrt[S]{I}=\sqrt[S]{J}$ for some finitely generated ideal $J$ then $$V_S(I)=V_S(J)$$ is  a quasi-compact open with respect to the $S$-flat topology. If $V_S(I)$ is an open with respect to the $S$-flat topology, since it is a closed with respect to the $S$-Zariski topology, there exists a finitely generated ideal $J$ such that $V_S(I)=V_S(J)$. Thus $\sqrt[S]{I}=\sqrt[S]{J}$.  
\end{enumerate}
\end{proof}
For $P\in \mathrm{Spec}_SR$, denote $\Lambda(P)$ the closer of the point $P$ with respect to the $S$-flat topology.
\begin{proposition}
For $P\in \mathrm{Spec}_SR$ we have $\Lambda(P)=\{Q\in \mathrm{Spec}_SR\ / sQ\subseteq P \text{ for some } s\in S\}$
\end{proposition}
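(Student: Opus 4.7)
My plan is to exploit the fact that the sets $V_S(J)$, for $J$ a finitely generated ideal of $R$, form a basis for the $S$-flat topology. Consequently, $Q\in\Lambda(P)$ if and only if every such basic open containing $Q$ also contains $P$, i.e., for every finitely generated ideal $J$, the membership $Q\in V_S(J)$ forces $P\in V_S(J)$. I will establish the two inclusions of the claimed equality through this reformulation.

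The inclusion $\supseteq$ should be routine: if $sQ\subseteq P$ for some $s\in S$ and $J$ is finitely generated with $Q\in V_S(J)$, pick $t\in S$ with $tJ\subseteq Q$; then $stJ\subseteq sQ\subseteq P$ with $st\in S$, showing $P\in V_S(J)$.

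The substantive direction is $\subseteq$. Assuming $Q\in\Lambda(P)$, the natural pointwise information comes from testing against principal ideals: for each $q\in Q$ one has $Q\in V_S(q)$ (with witness $s=1$), so $P\in V_S(q)$ provides some $s_q\in S$ with $s_qq\in P$. The main obstacle is to upgrade this family of witnesses $\{s_q\}_{q\in Q}$ into a single $s\in S$ valid for every $q\in Q$ at once. I plan to overcome this by passing to the associated prime $(P:s_p)$ supplied by the Remark of Section 3. That ideal is prime and contains $P$; moreover it is disjoint from $S$, for any $s\in (P:s_p)\cap S$ would give $s_ps\in P\cap S$, contradicting the $S$-primality of $P$. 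From $s_qq\in P\subseteq (P:s_p)$, the fact that $s_q\notin (P:s_p)$, and the primality of $(P:s_p)$, one obtains $q\in (P:s_p)$. Hence $Q\subseteq (P:s_p)$, which is exactly $s_pQ\subseteq P$, exhibiting $s_p\in S$ as the required uniform witness and completing the argument.
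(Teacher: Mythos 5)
Your proof is correct, and its substantive half takes a genuinely different route from the paper's. For the inclusion $\supseteq$ both arguments are the same: push a witness $t$ with $tJ\subseteq Q$ through $sQ\subseteq P$ to get $stJ\subseteq P$. For the inclusion $\subseteq$, the paper simply asserts that $V_S(Q)$ is a neighborhood of $Q$ and concludes $P\in V_S(Q)$, which hands over the uniform witness $s$ in one stroke; but since the basic $S$-flat opens are the $V_S(J)$ with $J$ \emph{finitely generated}, the set $V_S(Q)$ is a priori only an arbitrary intersection of opens when $Q$ is not finitely generated, so the paper's step is at best underjustified. You instead test $Q\in\Lambda(P)$ only against the legitimately open sets $V_S(q)$, $q\in Q$, obtaining a possibly non-uniform family of witnesses $s_q$, and then uniformize by passing to the prime ideal $(P:s_p)$ from the Remark of Section 3: since $(P:s_p)$ is prime, contains $P$, and is disjoint from $S$, each relation $s_qq\in P$ forces $q\in(P:s_p)$, i.e.\ $s_pQ\subseteq P$. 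This costs you an appeal to the structure theory of $S$-prime ideals but buys a fully rigorous argument that works for arbitrary (not necessarily finitely generated) $Q$; in effect your proof repairs the gap in the paper's.
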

\begin{proof}
Let $Q\in \Lambda(P)$. Since $V_S(Q)$ is a neighborhood of $Q$, we have $P\in V_S(Q)$, so $sQ\subseteq P$ for some $s\in S$. Conversely, let $Q\in \mathrm{Spec}_SR$ with $sQ\subseteq P$ for some $s\in S$. Now, take a basis open $V_S(I)$ of $\mathrm{Spec}_SR$ containing $Q$, then $tI\subseteq Q$ for some $t\in S$, so $stI\subseteq sQ\subseteq P $, that is, $P\in V_S(I)$. It follows that every open neighborhood of $Q$ contain $P$. Thus $Q\in \Lambda(P)$.  
\end{proof}

\begin{theorem}\label{Irreducible}
Avery irreducible closed subset with respect to the $S$-flat topology has a generic point. 
\end{theorem}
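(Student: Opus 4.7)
My plan is to exhibit a generic point of $C$ explicitly as the union
$$P_0 \;:=\; \bigcup_{P\in C} P^{*},$$
where $P^{*}=(P:s_P)$ denotes the prime ideal associated with the $S$-prime $P$ by the remark preceding the last proposition. I will verify in turn that $P_0$ is a prime ideal of $R$ disjoint from $S$ (hence itself an $S$-prime with $P_0^{*}=P_0$), that $P_0\in C$, and that $\Lambda(P_0)=C$. The irreducibility of $C$ enters through the observation that $V_S(h)=\{Q\in\mathrm{Spec}_SR:h\in Q^{*}\}$, so $V_S(h)\cap C$ is nonempty precisely when $h$ lies in some $P^{*}$ with $P\in C$; iterating the two-open definition of irreducibility then gives that any finite collection of nonempty basic $S$-flat opens of $C$ has a common point.

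For the first step, given $a,b\in P_0$ with $a\in P_a^{*}$ and $b\in P_b^{*}$ for some $P_a,P_b\in C$, both $V_S(a)\cap C$ and $V_S(b)\cap C$ are nonempty, so by irreducibility $V_S((a,b))\cap C=V_S(a)\cap V_S(b)\cap C$ is nonempty and produces $P\in C$ with $a,b\in P^{*}$; then $a+b\in P^{*}\subseteq P_0$. Stability under scaling is inherited from each $P^{*}$, and if $xy\in P_0$ then $xy\in P^{*}$ for some $P\in C$, whose primality forces $x\in P^{*}$ or $y\in P^{*}$, so $P_0$ is prime. Disjointness from $S$ is immediate from $P^{*}\cap S=\emptyset$ for each $P\in C$.

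The main technical step, and where I expect the principal obstacle, is showing $P_0\in C$. Since the $S$-flat closed sets are the intersections of the basic closed sets $D_S(I)$ with $I$ finitely generated, I can write $C=\bigcap_\alpha D_S(I_\alpha)$ and must verify $I_\alpha\not\subseteq P_0$ for every $\alpha$. If instead $I_\alpha=(f_1,\ldots,f_n)\subseteq P_0$ for some $\alpha$, then each $f_j$ belongs to some $P_j^{*}$ with $P_j\in C$, so the $n$ opens $V_S(f_j)\cap C$ are all nonempty, and the finite form of irreducibility produces $P\in C$ with $f_1,\ldots,f_n\in P^{*}$, i.e.\ $I_\alpha\subseteq P^{*}$, contradicting $P\in D_S(I_\alpha)$. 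It is precisely the finite generation of $I_\alpha$ that lets the pairwise statement of irreducibility be upgraded to the needed finite-intersection statement.

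The conclusion $\Lambda(P_0)=C$ is then formal: for any $Q\in C$ one has $Q\subseteq Q^{*}\subseteq P_0$, so taking $s=1\in S$ gives $Q\in\Lambda(P_0)$ by the preceding proposition, and hence $C\subseteq\Lambda(P_0)$; conversely $\Lambda(P_0)=\overline{\{P_0\}}\subseteq C$ because $C$ is $S$-flat closed and contains $P_0$. Thus $P_0$ is the required generic point.
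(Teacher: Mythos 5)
Your proof is correct, but it takes a genuinely different route from the paper's. The paper proves the theorem indirectly: it forms the set $Y$ of all $f$ with $F\cap V_S(f)\ne\emptyset$, shows $\cap_{f\in Y}V_S(f)$ is nonempty and meets $F$, and to do so it invokes Theorem \ref{Compact} (every $V_S(I)$ is quasi-compact in the $S$-flat topology) together with the quasi-compactness of $D_S(J)$ for finitely generated $J$ in the $S$-Zariski topology; any point of $F\cap\bigl(\cap_{f\in Y}V_S(f)\bigr)$ is then generic. You instead exhibit the generic point explicitly as $P_0=\cup_{P\in C}(P:s_P)$, and your argument uses only the irreducibility of $C$, the identity $V_S(h)=\{Q: h\in (Q:s_Q)\}$, and the fact that the basic $S$-flat opens are finite intersections $V_S(f_1)\cap\cdots\cap V_S(f_n)$ --- no compactness input at all. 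This is more elementary and self-contained, and it has the added benefit of producing a \emph{prime} generic point directly, which meshes with Lemma \ref{Unique} and the subsequent bijection with $\mathrm{Spec}\,S^{-1}R$. One small step you should make explicit: to conclude $P_0\in D_S(I_\alpha)$ from $I_\alpha\not\subseteq P_0$ you need $sI_\alpha\not\subseteq P_0$ for \emph{all} $s\in S$, which holds because $P_0$ is prime and disjoint from $S$ (so $sf\in P_0$ forces $f\in P_0$); this is a one-line observation, not a gap in substance.
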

\begin{proof}
Let $F$ be an irreducible closed subset of $\mathrm{Spec}_SR$ with respect to the $S$-flat topology. Let $Y$ be the set of all elements $f\in R$ such that $F\cap V_S(f)\ne \emptyset$. Then $\cap_{f\in Y}V_S(f)\ne \emptyset$.  In fact if $\cap_{f\in Y}V_S(f)=\emptyset$, then $\cup_{f\in Y}D_S(f)=\mathrm{Spec}_SR $, so $D_S(I)=\mathrm{Spec}R$ where $I=(Y)$ the ideal generated by $Y$. It follows that $s=a_1f_1+\ldots+a_mf_m$ for some $s\in S$, $a_i\in R$ and $f_i\in Y$, that is, $D_S(f_1)\cup\ldots\cup D_S(f_m)=\mathrm{Spec}_SR$, hence $\cap_{i=1}^mV_S(f_i)=\emptyset $, in particular $\cap_{i=1}^m(F\cap V(f_i))=\emptyset$ a contradiction since it is a finite intersection of a nonempty open subsets of $F$ which is irreducible. Now, we show that $F\cap\left( \cap_{f\in Y}V_S(f)\right)\ne\emptyset$. Suppose to the contrary that $ F\cap\left( \cap_{f\in Y}V_S(f)\right)=\emptyset$, that is, $V_S(I)\subseteq \mathrm{Spec}_SR-F$ where $I=(Y)$ the ideal generated by $Y$. Since $F$ is $S$-flat closed, $\mathrm{Spec}_SR-F=\cup_{J\in K}V_S(J)$  where $K$ is a subset of finitely generated ideals of $R$. We have $V_S(I)\subseteq \cup_{J\in K}V_S(J)$ and by Theorem \ref{Compact}   $V_S(I)$ is a quasi-compact subset with respect to th flat topology , so $V_S(I)\subseteq \cup_{J\in K_0}V_S(J)$ where $K_0\subseteq K$ is a finite subset of finitely generated ideals. Now, take $J_1=\prod_{J\in K_0}J$, then $J_1$ is a finitely generated ideal and $V_S(I)\subseteq V_S(J_1)$, that is , $D_S(J_1)\subseteq D_S(I)=\cup_{f\in Y}D_S(f)$. Since $J_1$ is finitely generated, $D_S(J_1)$ is quasi-compact with respect to the $S$-Zariski topology, so $D_S(J_1)\subseteq D_S(f_1)\cup\ldots\cup D_S(f_r)$ where $f_1,\ldots,f_r\in Y$. It follows that $F\subseteq D_S(J_1)\subseteq D_S(f_1)\cup\ldots\cup D_S(f_r)$, that is, $V_S(f_1)\cap\ldots\cap V_S(f_r)\cap F=\emptyset$ a contradiction since $F$ is irreducible. Thus $F\cap \left( \cap_{f\in Y}V_S(f)\right)\ne \emptyset $. Pick $P\in F\cap\left( \cap_{f\in Y}V_S(f)\right)$, then $\Lambda(P)=F$. 
\end{proof}
\begin{remark} If $P\in \mathrm{Spec}_SR$ and $s\in S$ then $sP\in \mathrm{Spec}_SR$ and it is easy to see that $\Lambda(sP)=\Lambda(P)$. It follows that generic point is not unique. But if we restrict to prime ideal we have the uniqueness, as in the following result.  
\end{remark}
\begin{lemma}\label{Unique}
If $p,q\in \mathrm{Spec}_SR$ are prime ideals. Then $\Lambda(p)=\Lambda(q)$ if and only if $p=q$. 
\end{lemma}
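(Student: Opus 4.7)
The ``if'' direction is immediate: equal ideals have equal closures.

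For the ``only if'' direction, my plan is to use the explicit description of $\Lambda(P)$ established in the proposition preceding the lemma, namely $\Lambda(P) = \{Q \in \mathrm{Spec}_SR \mid sQ \subseteq P \text{ for some } s \in S\}$. First I would observe that $p \in \Lambda(p)$, because $1 \in S$ and $1 \cdot p \subseteq p$. Assuming $\Lambda(p) = \Lambda(q)$, this membership gives $p \in \Lambda(q)$, so there exists $s \in S$ with $sp \subseteq q$. Symmetrically, there exists $t \in S$ with $tq \subseteq p$.

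Next I would exploit the hypothesis that $p$ and $q$ are genuine prime ideals (not merely $S$-prime). Since $p,q \in \mathrm{Spec}_S R$, both are disjoint from $S$ by definition of an $S$-prime ideal, so $s \notin q$ and $t \notin p$. From $sp \subseteq q$, for every $x \in p$ we have $sx \in q$; primality of $q$ together with $s \notin q$ forces $x \in q$. Hence $p \subseteq q$. The symmetric argument applied to $tq \subseteq p$ gives $q \subseteq p$, and therefore $p = q$.

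The argument is essentially a one-step application of the formula for $\Lambda$ combined with primality, so there is no serious obstacle. The only subtlety worth flagging is the need to invoke the defining condition $P \cap S = \emptyset$ for members of $\mathrm{Spec}_SR$ in order to guarantee that the ``witness'' scalars $s, t \in S$ lie outside $p$ and $q$; without primality (i.e.\ for a general $S$-prime ideal) this step fails, which is precisely why the previous remark observes that generic points are not unique in general but become unique once restricted to prime ideals.
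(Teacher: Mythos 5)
Your proof is correct and follows essentially the same route as the paper's: extract $sp\subseteq q$ and $tq\subseteq p$ from the description of $\Lambda$, then use primality together with $S$-disjointness to cancel the scalars and conclude $p=q$. You simply make explicit the cancellation step that the paper leaves implicit.
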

\begin{proof}
If $\Lambda(p)=\Lambda(q)$ then $sp\subseteq q$ and $s'q\subseteq p$ for some $s,s'\in S$, so $p\subseteq q$ and $q\subseteq q$, thus $p=q$.
\end{proof}
\begin{proposition}
The map $P\mapsto \Lambda(P)$ is bijection between $\mathrm{Spec}S^{-1}R$ and the irreducible closed subsets of $\mathrm{Spec}_SR$ with respect to the $S$-flat topology Moreover, under this correspondence, the set of maximal ideals of $S^{-1}R$ is in bijection with the set of $S$-flat irreducible components of $\mathrm{Spec}_SR$.   
\end{proposition}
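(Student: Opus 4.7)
The plan is to decompose the statement into four parts: (i) the map $P \mapsto \Lambda(P)$ is well-defined into the set of irreducible closed subsets, (ii) it is injective, (iii) it is surjective, and (iv) it restricts to the claimed bijection on maximal ideals. Throughout I identify $\mathrm{Spec}\,S^{-1}R$ with the set of prime ideals $p$ of $R$ such that $p\cap S=\emptyset$, via the usual correspondence $\mathfrak{p}\mapsto \mathfrak{p}\cap R$. Such a prime is in particular an $S$-prime ideal of $R$, so $\Lambda(p)$ makes sense; and the closure of any point in any topological space is irreducible, giving well-definedness.

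Injectivity is immediate from Lemma \ref{Unique}: if $p,q$ are prime ideals disjoint from $S$ and $\Lambda(p)=\Lambda(q)$, then $p=q$. The main content is surjectivity. Given an irreducible $S$-flat closed set $F$, Theorem \ref{Irreducible} produces an $S$-prime $P$ with $\Lambda(P)=F$. The $P$ obtained need not be a prime ideal, so I would apply the construction of the remark preceding the proposition: choose $s_P\in S$ with $p:=(P:s_P)$ prime. I claim $p\cap S=\emptyset$, for if $s\in S\cap p$ then $s\cdot s_P\in P\cap S$, contradicting $P\cap S=\emptyset$. Thus $p$ corresponds to a prime of $S^{-1}R$. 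It remains to check $\Lambda(p)=\Lambda(P)$, using the explicit description $\Lambda(X)=\{Q:tQ\subseteq X\text{ for some }t\in S\}$ from the earlier proposition. Since $P\subseteq p$ (any $a\in P$ satisfies $s_Pa\in P$), we get $\Lambda(P)\subseteq\Lambda(p)$; conversely, if $tQ\subseteq p$ then $(ts_P)Q\subseteq s_P\cdot p\subseteq P$, giving $\Lambda(p)\subseteq\Lambda(P)$.

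For the moreover part, I would establish the order-preserving dictionary: for primes $p,q$ disjoint from $S$, $\Lambda(p)\subseteq\Lambda(q)$ if and only if $p\subseteq q$. The ``if'' direction is trivial (take $s=1$). For ``only if'', $p\in\Lambda(p)\subseteq\Lambda(q)$ gives $sp\subseteq q$ for some $s\in S$; since $q$ is prime and $s\notin q$, this forces $p\subseteq q$. Irreducible components of $\mathrm{Spec}_SR$ in the $S$-flat topology are by definition the maximal irreducible closed subsets, hence correspond under the bijection to the maximal elements of $\{p\in\mathrm{Spec}\,R:p\cap S=\emptyset\}$ under inclusion, and these are exactly the contractions of the maximal ideals of $S^{-1}R$.

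The only delicate step is the surjectivity: one must pass from an arbitrary generic $S$-prime $P$ produced by Theorem \ref{Irreducible} to a genuine prime ideal disjoint from $S$ with the same $\Lambda$-closure. This is where the identity $\Lambda(p)=\Lambda(P)$ for $p=(P:s_P)$ is essential, together with the independence of $(P:s_P)$ from the choice of witness $s_P$ recorded in the remark. Everything else is a direct application of results already established in the paper.
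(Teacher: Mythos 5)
Your proof is correct and follows essentially the same route as the paper: Theorem \ref{Irreducible} together with the passage to the prime $(P:s_P)$ for surjectivity, Lemma \ref{Unique} for injectivity, and the order correspondence ($\Lambda(p)\subseteq\Lambda(q)$ exactly when $p\subseteq q$ for primes disjoint from $S$) for the claim about maximal ideals and irreducible components. You supply somewhat more detail than the paper does --- notably the explicit verification that $\Lambda(P)=\Lambda\bigl((P:s_P)\bigr)$, which the paper only asserts --- but the underlying argument is the same.
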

\begin{proof}
Let $F$ be an irreducible closed subset with respect to the $S$-flat topology, from Theorem \ref{Irreducible}, $F=\Lambda(P)$ for some $S$-prime ideal $P$. The result follows from the fact that $(P:s_p)$ is a prime ideal disjoint with $S$  for some $s_p\in S$ and $\Lambda(P)=\Lambda(P:s_p)$. The uniqueness flows from the Lemma \ref{Unique}. Moreover, if $P$ is a maximal ideal of $R$ with respect to disjoint with $S$ (that is maximal ideal of $S^{-1}R$) and $\Lambda(P)\subseteq \Lambda(Q)$ for some prime ideal $Q\in \mathrm{Spec}_SR$. Then $sP\subseteq Q $, so $P\subseteq Q$, thus $P=Q$, hence $\Lambda(P)=\Lambda(Q)$. Now, assume that $\Lambda(P)$ is an irreducible component where $P\in \mathrm{Spec}_SR$ is a prime ideal. Let $M$ be a prime ideal containing $P$ and disjoint with $S$. We have $\Lambda( P)\subseteq \Lambda(M)$. Since $\Lambda(P)$ is an irreducible component, we get $ \Lambda( P)= \Lambda(M)$, it follows from the Lemma \ref{Unique}, that $P=M$.  
\end{proof}
\begin{corollary}
The following statements are equivalent.
\begin{enumerate}
\item $\mathrm{Spec}_SR$ with respect to the $S$-flat topology is $T_0$.
\item  Every $S$-prime ideal is a prime ideal.
\end{enumerate}
\end{corollary}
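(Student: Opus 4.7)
The approach is to use the standard characterization that a topological space is $T_0$ if and only if distinct points have distinct closures. In our setting this means $\mathrm{Spec}_SR$ is $T_0$ with respect to the $S$-flat topology exactly when $\Lambda(P)=\Lambda(Q)$ forces $P=Q$ for all $P,Q\in \mathrm{Spec}_SR$.

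For $(2)\Rightarrow(1)$, if every $S$-prime ideal is actually a prime ideal, then any two points of $\mathrm{Spec}_SR$ are genuine prime ideals, and Lemma \ref{Unique} gives immediately that $\Lambda(P)=\Lambda(Q)$ implies $P=Q$.

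For $(1)\Rightarrow(2)$, let $P\in \mathrm{Spec}_SR$ be arbitrary. Using the Remark, choose $s_P\in S$ so that $(P:s_P)$ is a prime ideal. The central step is to show that $(P:s_P)$ is itself a point of $\mathrm{Spec}_SR$ and that $\Lambda(P)=\Lambda((P:s_P))$; once both are established, the $T_0$ hypothesis forces $P=(P:s_P)$, so $P$ is prime. To verify $(P:s_P)\in \mathrm{Spec}_SR$ I would observe that if $s\in (P:s_P)\cap S$ then $ss_P\in P\cap S$, contradicting $P\in \mathrm{Spec}_SR$; hence $(P:s_P)$ is a prime ideal disjoint from $S$. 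For the closure equality, applying the description of $\Lambda$ from the preceding Proposition, the trivial containment $P\subseteq (P:s_P)$ yields $P\in \Lambda((P:s_P))$ (taking $s=1\in S$), while $s_P(P:s_P)\subseteq P$ by definition of the colon ideal yields $(P:s_P)\in \Lambda(P)$; since each point lies in the closure of the other, the two closures coincide.

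I do not expect any serious obstacle here; the only point worth flagging is that $P$ and $(P:s_P)$ really are distinct points when $P$ fails to be prime. This however is automatic from the $S$-prime condition itself: if $ab\in P$ with $a,b\notin P$ then $s_Pa\in P$ or $s_Pb\in P$, furnishing an element of $(P:s_P)\setminus P$. Thus under (1) every $S$-prime must coincide with its prime companion $(P:s_P)$, giving (2).
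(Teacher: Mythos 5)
Your proposal is correct and follows essentially the same route as the paper: for $(1)\Rightarrow(2)$ the paper likewise passes to the prime companion $(P:s_P)$, asserts $\Lambda(P)=\Lambda(P:s_P)$, and invokes $T_0$ to conclude $P=(P:s_P)$, while $(2)\Rightarrow(1)$ is Lemma \ref{Unique} in both treatments. Your write-up merely supplies the details the paper leaves implicit (that $(P:s_P)$ is disjoint from $S$ and the mutual-containment verification of the closure equality), and your final "distinctness" check is unnecessary since $T_0$ applied to equal closures already yields $P=(P:s_P)$.
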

\begin{proof}
$(1)\Rightarrow(2)$ If $P$ is an $S$-prime ideal then $\Lambda(P)=\Lambda(P:s_p)$ where $s_p\in S$ such that $(P:s_p)$ is a prime ideal. Then $P=(P:s_p)$ is a prime ideal.\\
$(2)\Rightarrow(1)$ If $\Lambda(P)=\lambda(Q)$ with $P,Q\in \mathrm{Spect}_SR$ which are prime ideals, then by Lemma \ref{Unique}, $P=Q$, so $\mathrm{Spec}_SR$ is $T_0$.
\end{proof}
 \section{\bf Connectivity}
 \begin{theorem}
 Assume that $\mathrm{Spec}_SR=C_1\cup C_2$ with $C_1\cap C_2=\emptyset$.The following statements are equivalent.
 \begin{enumerate}
 \item $C_1$, $C_2$ are closed with respect to the $S$-Zariski topology.
 \item $C_i=V_S(f_i)$ for $i=1,2$, such that $f_1+f_2\in S$ and $ f_1f_2\in \sqrt[S]{0}$.
 \item $C_1$, $C_2$ are closed with respect to the $S$-flat topology.
 \end{enumerate}
 \end{theorem}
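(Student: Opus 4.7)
The plan is to establish the cycle $(2) \Rightarrow (3) \Rightarrow (1) \Rightarrow (2)$. The backbone is Theorem \ref{Compact} together with Corollary \ref{qcomact}(2), and the elementary observation that two disjoint subsets covering $\mathrm{Spec}_SR$ are each other's set-theoretic complement, so they are simultaneously open and closed in any topology placed on $\mathrm{Spec}_SR$.

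For $(2) \Rightarrow (3)$ essentially nothing has to be proved: by the very definition of the $S$-flat topology, each $V_S(f_i)$ is a sub-basic open, hence $C_1, C_2$ are $S$-flat open, and being each other's complement they are also $S$-flat closed; the algebraic side conditions $f_1+f_2 \in S$ and $f_1 f_2 \in \sqrt[S]{(0)}$ are actually unused here. For $(3) \Rightarrow (1)$, I would note that each $C_i$ is $S$-flat closed inside the $S$-flat quasi-compact space $\mathrm{Spec}_SR$ (Corollary \ref{qcomact}(2)), hence $S$-flat quasi-compact, while simultaneously being $S$-flat open (complement of $C_{3-i}$). Expressing $C_i$ as a union of basic $S$-flat opens $V_S(J_\alpha)$ with $J_\alpha$ finitely generated and extracting a finite subcover yields $C_i = V_S(J_{\alpha_1}) \cup \cdots \cup V_S(J_{\alpha_k}) = V_S(J_{\alpha_1}\cdots J_{\alpha_k})$, a single $S$-Zariski closed set.

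The main content is $(1) \Rightarrow (2)$. I start by writing $C_i = V_S(I_i)$. Disjointness says $V_S(I_1+I_2)=\emptyset$, so by an earlier proposition $(I_1+I_2)\cap S \ne \emptyset$; I pick $s \in (I_1+I_2)\cap S$ and decompose $s = a_1 + a_2$ with $a_i \in I_i$. The covering condition says $V_S(I_1 I_2) = \mathrm{Spec}_SR$, equivalently $\sqrt[S]{I_1 I_2} = \sqrt[S]{(0)}$, so in particular $a_1 a_2 \in I_1 I_2 \subseteq \sqrt[S]{(0)}$. Setting $f_i := a_i$, the required identities $f_1+f_2 = s \in S$ and $f_1 f_2 \in \sqrt[S]{(0)}$ hold automatically. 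It remains to check $V_S(f_i) = C_i$: we have $V_S(f_1) \cap V_S(f_2) = V_S((f_1, f_2)) = \emptyset$ because $s \in (f_1,f_2)\cap S$, and $V_S(f_1) \cup V_S(f_2) = V_S((f_1 f_2)) = \mathrm{Spec}_SR$ because $\sqrt[S]{(f_1 f_2)} = \sqrt[S]{(0)}$. So $\{V_S(f_1), V_S(f_2)\}$ is another partition of $\mathrm{Spec}_SR$; combined with the obvious inclusions $C_i = V_S(I_i) \subseteq V_S(f_i)$ (since $f_i \in I_i$), this forces $C_i = V_S(f_i)$. The subtle point to watch is the passage from the ideals $I_1, I_2$ to a \emph{single} element on each side: the decomposition $s = a_1 + a_2$ is exactly what supplies the two required generators at once, and without it one would only obtain finitely generated ideals rather than individual elements.
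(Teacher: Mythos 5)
Your proposal is correct and follows essentially the same route as the paper: the same decomposition $s=f_1+f_2$ with $f_i\in I_i$ for $(1)\Rightarrow(2)$, the same complement observation for $(2)\Rightarrow(3)$, and the same quasi-compactness extraction of finitely many basic opens for $(3)\Rightarrow(1)$. The only (cosmetic) differences are that you run the compactness argument on each $C_i$ as a closed subset of the quasi-compact space rather than on the whole cover of $\mathrm{Spec}_SR$, and that you state the inclusion $C_i=V_S(I_i)\subseteq V_S(f_i)$ in the correct direction, which the paper appears to have reversed by a slip before concluding equality the same way.
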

 \begin{proof}
 $(1)\Rightarrow(2)$ Let $I,J$ be ideals of $R$ with $C_1=V_S(I)$ and $C_2=V_S(J)$.  Since $C_1\cup C_2=V(IJ)=\mathrm{Spec}_SR$ and $C_1\cap C_2=V_S(I+J)=\emptyset$, we have  $\sqrt[S]{IJ}=\sqrt[S]{(0)}$ and $(I+ J)\cap S\neq\emptyset $, so $s=f_1+f_2\in S$ for some $f_1\in I$ and $f_2\in J$. Since $f_1f_2\in IJ$  and $\sqrt[S]{IJ}=\sqrt[S]{(0)}$, it follows that $f_1f_2\in \sqrt[S]{0}$. Clearly, $V_S(f_1)\subseteq V_S(I)$, $V_S(f_2)\subseteq V_S(J)$, $V_S(f_1)\cup V_S(f_2)=\mathrm{Spec}_SR$ and $V_S(f_1)\cap V_S(f_2)=\emptyset$, then $V_S(f_1)=V_S(I)=C_1$ and $V_S(f_2)=V_S(J)=C_2$.\\
 $(2)\Rightarrow(3)$ In this cases, $V_S(f_1)=\mathrm{Spec}_SR-V_S(f_2)$ and $V_S(f_2)=\mathrm{Spec}_SR-V_S(f_1)$, so $V_S(f_1)$ and $V_S(f_2)$ are closed with respect to the $S$-flat topology.\\ 
 $(3)\Rightarrow(1)$ If $C_1,C_2$ are closed with respect to the $S$-flat topology, then they are opens with respect to the $S$-flat topology. So $C_1=\cup_{I\in K_1}V_S(I)$ and $C_2=\cup_{I\in K_2}V_S(I)$ where $K_1,K_2$ are subsets of finitely generated ideals. Since $\mathrm{Spec}_SR=\left(\cup_{I\in K_1}V_S(I)\right)\cup \left(\cup_{I\in K_2}V_S(I)\right)$ and $\mathrm{Spec}_SR$ is quasi-compact with respect to the $S$-flat topology,  we get $\mathrm{Spec}_SR= \left(\cup_{I\in K_1'}V_S(I)\right)\cup\left(\cup_{I\in K_2'}V_S(I)\right)$ where $K_1'\subseteq K_1$ and $K_2'\subseteq K_2$ are finite subsets. Set $I_1=\prod_{I\in K_1'}I$ and $I_2=\prod_{I\in K_2'}$, then $ \mathrm{Spec}_SR=V_S(I_1)\cup V_S(I_2)$ and $V_S(I_1)\cap V_S(I_1)=\emptyset $. Thus $C_1=V_1(I_1)$ and $C_2=V_S(I_2)$ and $C_1,C_2$ are closed with respect to the $S$-Zariski topology.
 \end{proof}
 \begin{corollary}
 $\mathrm{Spec}_SR$ is connected with respect to the $S$-flat topology if and only if it is connected with respect to the $S$-zariski topology.
 \end{corollary}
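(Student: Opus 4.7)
The plan is to derive this corollary directly from the preceding theorem, which already does the real work. Recall that a topological space is connected precisely when it admits no decomposition into two disjoint nonempty closed subsets. Since both topologies live on the same underlying set $\mathrm{Spec}_SR$, connectedness or disconnectedness is detected by the existence of such partitions, and the theorem tells us that the class of admissible partitions is identical for the two topologies.

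More concretely, I would argue by contrapositive in both directions simultaneously. Suppose $\mathrm{Spec}_SR$ is disconnected with respect to one of the two topologies (say the $S$-Zariski, the other direction being symmetric). Then one can write $\mathrm{Spec}_SR = C_1 \cup C_2$ as a disjoint union of two nonempty subsets that are closed in that topology. Condition $(1)$ of the preceding theorem is therefore satisfied, so by the equivalence $(1) \Leftrightarrow (3)$ established there, $C_1$ and $C_2$ are also closed with respect to the $S$-flat topology. The same partition then witnesses disconnectedness in the $S$-flat topology as well. The reverse implication proceeds identically, starting from an $S$-flat clopen decomposition and applying $(3) \Rightarrow (1)$.

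I do not expect any genuine obstacle here: the theorem was formulated precisely so that the disjoint-closed-partition condition transfers between the two topologies, and nonemptiness of $C_1, C_2$ is preserved trivially because the partition itself is unchanged. The only thing to verify in passing is that a disconnection need only be given by closed sets (rather than clopen sets), but this is standard, since in a disjoint union $C_1 \cup C_2 = \mathrm{Spec}_SR$, each $C_i$ is automatically the complement of the other and hence also open in the same topology. Thus the corollary is essentially a one-line consequence of the equivalence $(1) \Leftrightarrow (3)$ of the previous theorem.
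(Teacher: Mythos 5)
Your argument is correct and is exactly the route the paper intends: the corollary is stated as an immediate consequence of the equivalence $(1)\Leftrightarrow(3)$ of the preceding theorem, which transfers any disjoint closed partition of $\mathrm{Spec}_SR$ between the two topologies. The paper's own proof is simply ``immediate from the previous Theorem,'' so your elaboration fills in precisely the standard details the author omits.
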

 \begin{proof}
 The proof is immediate from the previous Theorem. 
 \end{proof}
 \iffalse
 \begin{corollary}
 The following statements are equivalent.
 \begin{enumerate}
 \item $\mathrm{Spec}_SR$ is connected with respect to the $S$-flat topology.
 \item For any idempotent $\frac{a}{s}\in S^{-1}R$, either $(a)\cap S=\ne \emptyset$ or $(s-e)\cap S\ne \emptyset $.
 \end{enumerate}
 \end{corollary}
 \begin{remark} If $S$ is a saturated multiplication subset of $R$. Then $\mathrm{Spec}_SR$ is connected with respect to the $S$-flat topology if and only if for any idempotent $ \frac{a}{s}\in S^{-1}R$, either $a\in S$ or $s-a\in S$.
 \end{remark}
 \fi 
 \section{\bf Noetherianess }
 \begin{lemma}\label{Irre}
 Let $I$ be an ideal of $R$ and $P\in D_S(I)$. Then $\Lambda(P)\subseteq D_S(I)$.
 \end{lemma}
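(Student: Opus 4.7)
The plan is to argue by contrapositive, using the explicit description of $\Lambda(P)$ supplied by the proposition proved earlier, namely that $\Lambda(P)=\{Q\in\mathrm{Spec}_SR\ /\ sQ\subseteq P\text{ for some }s\in S\}$, together with the definition $D_S(I)=\{Q\in\mathrm{Spec}_SR\ /\ sI\not\subseteq Q\text{ for all }s\in S\}$.

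The setup is as follows. Suppose for contradiction that there exists $Q\in\Lambda(P)$ with $Q\notin D_S(I)$. From $Q\in\Lambda(P)$, I extract some $t\in S$ with $tQ\subseteq P$. From $Q\notin D_S(I)$, i.e.\ $Q\in V_S(I)$, I extract some $s\in S$ with $sI\subseteq Q$. Now I combine these two inclusions: since multiplication by $t$ preserves containment, I get
\[
tsI=t(sI)\subseteq tQ\subseteq P.
\]
Because $S$ is multiplicatively closed, $ts\in S$, and the chain above shows $(ts)I\subseteq P$. This says exactly that $P\in V_S(I)$, contradicting $P\in D_S(I)$. Hence no such $Q$ exists, and $\Lambda(P)\subseteq D_S(I)$.

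I do not anticipate a genuine obstacle here; the only substantive ingredient is the characterization of $\Lambda(P)$ established in the preceding proposition, and the argument is a one-line composition of containments using the fact that $S$ is closed under products. Structurally this lemma simply says that $D_S(I)$ is a union of $S$-flat closures of its points, which is exactly what one expects from a basic open in a spectral-type setting; it is the natural preparatory fact for a subsequent result (presumably that $D_S(I)$ is stable under $S$-flat specialization, or the dual statement that $V_S(I)$ is $S$-flat open when $I$ is $S$-radically finite).
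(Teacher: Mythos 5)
Your proof is correct and is essentially identical to the paper's: both take $Q\in\Lambda(P)$ with $tQ\subseteq P$, assume $Q\in V_S(I)$ so that $sI\subseteq Q$, and derive the contradiction $tsI\subseteq tQ\subseteq P$ using that $S$ is multiplicatively closed. No differences worth noting beyond the naming of the elements of $S$.
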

 \begin{proof}
 Let $Q\in \Lambda(P)$, then $sQ\subseteq p$ for some $s\in S$. If $Q\in V_S(I)$, then $ tI\subseteq Q$ for some $t\in S$, so $tsI\subseteq sQ\subseteq P$ a contradiction. Thus $Q\in D_S(I)$. 
 \end{proof}
 \begin{theorem}
 The following statements are equivalent.
 \begin{enumerate}
 \item $\mathrm{Spec}_SR$ is a noetherian space with respect to the $S$-flat topology.
 \item The opens of $\mathrm{Spec}_SR$ with respect to the $S$-flat topology are $V(I)$ where $I$ runs through the set of  finitely generated ideals of $R$.
 \item For every prime ideal $P$ disjoint with $S$, $\Lambda(P)=D_S(f)$ for some $f\in R$. 
 \item Any arbitrary intersection of $S$-Zariski open subsets is an $S$-Zariski open.
 \end{enumerate}
 \end{theorem}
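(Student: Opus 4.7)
The plan is to establish the cycle $(1)\Leftrightarrow(2)\Rightarrow(3)\Rightarrow(4)\Rightarrow(2)$. The equivalence $(1)\Leftrightarrow(2)$ is essentially a repackaging of Corollary \ref{qcomact}: a topological space is noetherian iff every open is quasi-compact, and Corollary \ref{qcomact}(1) already identifies the quasi-compact $S$-flat opens as exactly the sets $V_S(I)$ with $I$ finitely generated, so noetherianness is precisely the assertion that every $S$-flat open has this form. In the $(1)\Rightarrow(2)$ direction one uses that a general $S$-flat open $\bigcup_\alpha V_S(I_\alpha)$ with each $I_\alpha$ finitely generated collapses by quasi-compactness to a finite sub-union $V_S(I_{\alpha_1})\cup\cdots\cup V_S(I_{\alpha_n}) = V_S(I_{\alpha_1}\cdots I_{\alpha_n})$, which is $V_S$ of a finitely generated ideal.

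For $(2)\Rightarrow(3)$, fix a prime $P$ disjoint from $S$. Then $\Lambda(P)$ is the $S$-flat closure of a point, hence irreducible in the $S$-flat topology. Under $(2)$ its complement is some $V_S(I)$ with $I=(f_1,\ldots,f_n)$ finitely generated, so $\Lambda(P) = D_S(f_1)\cup\cdots\cup D_S(f_n)$. Each $D_S(f_i)$ is itself $S$-flat closed under $(2)$, so irreducibility of $\Lambda(P)$ forces $\Lambda(P) = D_S(f_i)$ for some $i$. For $(4)\Rightarrow(2)$, write an $S$-flat open $U$ as $\bigcup_\alpha V_S(I_\alpha)$ with each $I_\alpha$ finitely generated; its complement $\bigcap_\alpha D_S(I_\alpha)$ is $S$-Zariski open by $(4)$, say equal to $D_S(J)$, whence $U = V_S(J)$. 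By Theorem \ref{Compact}, $V_S(J)$ is $S$-flat quasi-compact, so the cover $\bigcup_\alpha V_S(I_\alpha)$ admits a finite sub-cover, which as above is $V_S$ of a finitely generated ideal.

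The principal step is $(3)\Rightarrow(4)$. Given $U = \bigcap_\alpha D_S(I_\alpha)$, I intend to show $U = \bigcup_P \Lambda(P)$ where $P$ ranges over the primes disjoint from $S$ that lie in $U$. The inclusion $\supseteq$ follows from Lemma \ref{Irre}, since $P\in U$ gives $\Lambda(P)\subseteq D_S(I_\alpha)$ for every $\alpha$. For $\subseteq$, given an arbitrary $Q\in U$, pick $s_Q\in S$ so that $P_Q := (Q:s_Q)$ is prime (and automatically disjoint from $S$); then $Q\subseteq P_Q$ yields $Q\in\Lambda(P_Q)$, and $P_Q\in U$ because if $tI_\alpha\subseteq P_Q$ for some $t\in S$, then $s_Q t I_\alpha\subseteq Q$ with $s_Q t\in S$, contradicting $Q\in D_S(I_\alpha)$. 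Invoking $(3)$, each $\Lambda(P) = D_S(f_P)$, so $U = \bigcup_P D_S(f_P) = D_S(J)$ where $J$ is the ideal generated by all the $f_P$, and hence $U$ is $S$-Zariski open. The delicate bookkeeping here is verifying that the associated prime $(Q:s_Q)$ still sits inside $U$ whenever $Q$ does; once this passage from $S$-primes to ordinary primes is in hand, the union decomposition together with $(3)$ closes the cycle.
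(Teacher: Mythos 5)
Your proof is correct and follows the same skeleton as the paper's: $(1)\Leftrightarrow(2)$ via Corollary \ref{qcomact}, $(2)\Rightarrow(3)$ by writing $\Lambda(P)$ as a finite union of the closed sets $D_S(f_i)$ and invoking irreducibility, $(3)\Rightarrow(4)$ by decomposing the intersection as a union of closures of points, and the final arrow by observing that an $S$-flat open whose complement is $S$-Zariski open must be some $V_S(J)$, hence $S$-flat quasi-compact by Theorem \ref{Compact}. The one place you genuinely improve on the paper is $(3)\Rightarrow(4)$: the paper writes $D=\bigcup_{P\in D}\Lambda(P)$ and asserts each $\Lambda(P)$ is $S$-Zariski open ``by hypothesis,'' but hypothesis $(3)$ only concerns prime ideals disjoint from $S$, whereas the points of $D$ are merely $S$-prime. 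Your substitution of each $Q\in D$ by the associated prime $(Q:s_Q)$, together with the checks that $Q\in\Lambda((Q:s_Q))$ and that $(Q:s_Q)$ still lies in $D$ (since $tI_\alpha\subseteq(Q:s_Q)$ would give $s_QtI_\alpha\subseteq Q$), is exactly the bookkeeping needed to close this gap; alternatively one could invoke the identity $\Lambda(Q)=\Lambda((Q:s_Q))$ already noted elsewhere in the paper. Your rerouting of the last implication through $(2)$ rather than $(1)$ is immaterial, since $(1)\Leftrightarrow(2)$ is established first.
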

 \begin{proof}
 $(1)\Rightarrow (2)$. Is immediate from the Corollary \ref{qcomact}.\\
 $(2)\Rightarrow(3)$. Since $\Lambda(P)$ is closed with respect to the $S$-flat topology, $\Lambda(P)=D_S(I)=D_S(f_1)\cup\ldots\cup D_S(f_r)$ where $I=(f_1,\ldots,f_r)$ is a finitely generated ideal of $R$. But $\Lambda(P)$ is an irreducible closed subset with respect to the $S$-flat topology and each $D_S(f_)$ is closed with respect to the $S$-flat topology, so $\Lambda(P)=D_S(f_i)$ for some $f_i$.\\
$(3)\Rightarrow(4)$. Let $(D_S(I_k))_k$ be a collection of $S$-Zariski open subsets and $D=\cap_kD_S(I_k)$. If $D$ is empty then it is an $S$-Zariski open. If $D$ is a nonempty set and  $Q\in D$, then $Q\in D_S(I_k)$ for all $k$, by Lemma \ref{Irre},  $\Lambda(P)\subseteq D_S(I_k)$ for all $k$. Thus $\Lambda(P)\subseteq D$. It follows that $D=\cup_{P\in D}\Lambda(P)$ is an $S$-Zariski open since by hypothesis each $\Lambda(P)$ is an $S$-Zariski open.\\
$(4)\Rightarrow(1)$. Let $U=\cup_{I\in K}V(I)$ be an $S$-flat open where each $I\in K$ is a finitely generated ideal of $R$. Since $U$ is $S$-Zariski closed as union of Zariski closed, $U=V(J)$ for some ideal $J$. By the Corollary \ref{qcomact}, $U$ is quasi-compact.    
 \end{proof}
 \section{\bf A question about flat morphism }
 It is well known that if $R_1\to R_2$ is a flat morphism then the going-down property holds. To realize the $S$-flat topology from flat morphisms, this depend on the following question.\\  
 \textbf{Question :}  Let $f:R_1\to R_2$ be a morphism of rings and $S$ be multiplicatively closed subset of $R_1$ such that $0\not\in f(S)$. Is the going-down property holds for $S$-prime ideals?

\end{document}